\documentclass[11pt,reqno]{amsart}

\usepackage[a4paper,margin=1in]{geometry}
\usepackage{amsmath,amssymb,amsthm,mathtools}
\usepackage{enumitem}
\usepackage{booktabs}
\usepackage{xcolor}
\usepackage[colorlinks=true,
            linkcolor=blue!60!black,
            citecolor=blue!60!black,
            urlcolor=blue!60!black]{hyperref}

\newtheorem{theorem}{Theorem}[section]
\newtheorem{proposition}[theorem]{Proposition}
\newtheorem{lemma}[theorem]{Lemma}
\newtheorem{corollary}[theorem]{Corollary}

\theoremstyle{definition}

\newtheorem{example}[theorem]{Example}

\theoremstyle{remark}
\newtheorem{remark}[theorem]{Remark}

\DeclareMathOperator{\Ann}{Ann}
\DeclareMathOperator{\Z}{Z}
\DeclareMathOperator{\U}{U}
\newcommand{\WG}{W\Gamma}
\newcommand{\m}{\mathfrak{m}}
\newcommand{\F}{\mathbb{F}}

\title[Laplacian Spectrum of the Weakly Zero-Divisor Graph]%
{Laplacian Spectrum of the Weakly Zero-Divisor Graph\\
of a Finite Commutative Ring}

\author{Hampher Shylla}
\address{Department of Mathematics, North Eastern Hill University,
Shillong 793022, India}
\email{xyza3956@gmail.com}

\author{Sainkupar Mn Mawiong}
\address{Department of Basic Sciences and Social Sciences, North
Eastern Hill University, Shillong 793022, India}
\email{skupar@gmail.com}

\author{John Paul Jala Kharbhih}
\address{Department of Mathematics, North Eastern Hill University,
Shillong 793022, India}
\email{jpkharbhih@gmail.com}

\subjclass[2020]{Primary 05C50; Secondary 05C25, 13A70, 15A18}

\keywords{Weakly zero-divisor graph; Laplacian spectrum; finite
commutative ring; complete multipartite graph; algebraic
connectivity; spanning trees; adjacency spectrum}

\date{\today}

\begin{document}
\maketitle

\begin{abstract}
For a commutative ring $R$ with identity, the \emph{weakly zero-divisor
graph} $\WG(R)$ has vertex set $\Z(R)^{\ast}$, with distinct vertices
$x$ and $y$ adjacent whenever there exist nonzero $r\in\Ann(x)$ and
$s\in\Ann(y)$ with $rs=0$. The Laplacian spectrum of $\WG(\Z_n)$ has
been determined by Shariq, Mathil, and Kumar, who also established that
$\WG(\Z_n)$ is Laplacian integral. Building on the structural
description of $\WG(R)$ due to Nikmehr, Azadi, and Nikandish, we extend
the Laplacian spectrum and integrality results from $\Z_n$ to
\emph{every} finite commutative ring $R$: we restate
$\WG(R)$ in unified form as a complete multipartite graph whose parts
are made explicit by the local-ring decomposition of $R$, compute the
full Laplacian spectrum in closed form, prove Laplacian integrality of
$\WG(R)$, and give a sharp bound on the number of distinct Laplacian
eigenvalues. As consequences we obtain explicit formulas for the
algebraic connectivity and number of spanning trees of $\WG(R)$, and
recover the Laplacian spectrum of $\WG(\Z_n)$ in compact form.
\end{abstract}

\section{Introduction}

The interplay between the algebraic structure of a commutative ring and
the combinatorial structure of an associated graph has been studied
intensively since Beck's seminal work \cite{Beck} on zero-divisor
graphs. Anderson and Livingston \cite{AL} refined Beck's construction
and defined the \emph{zero-divisor graph} $\Gamma(R)$ on the nonzero
zero-divisors of a commutative ring $R$, with $x \sim y$ iff $xy=0$.
The graph $\Gamma(R)$ has been studied extensively from many
perspectives; see \cite{AL} and the references therein.

Several variants have been proposed to capture different aspects of the
zero-divisor structure. Among the most natural is the
\emph{weakly zero-divisor graph} $\WG(R)$, introduced by Nikmehr, Azadi,
and Nikandish \cite{NNB}: two distinct nonzero zero-divisors $x,y$ are
adjacent in $\WG(R)$ when there exist nonzero $r\in\Ann(x)$ and
$s\in\Ann(y)$ with $rs=0$. This is a strictly weaker condition than
$xy=0$, so $\Gamma(R)$ is a spanning subgraph of $\WG(R)$. The graph
$\WG(R)$ encodes the annihilator-ideal structure of $R$ more thoroughly
than $\Gamma(R)$.

The starting point of this paper is a structural description of
$\WG(R)$ established in \cite{NNB} in the course of studying its
chromatic number. Write $R\cong\prod_{i=1}^{n} R_i$ for the
decomposition into finite local rings, unique up to isomorphism and
reordering of the factors (Proposition \ref{prop:artinian-decomp}),
and identify each $x\in R$ with its tuple $(x_1,\ldots,x_n)$,
$x_i\in R_i$. Let $F=\{i : R_i\text{ is a field}\}$ be the set of
indices of the field factors. Partition $\Z(R)^{\ast}$ into
\[
V_k = \{x\in\Z(R)^{\ast} : x_k=0 \text{ and } x_i\in\U(R_i)\text{ for all } i\neq k\}
\quad (k\in F),
\qquad V_0 = \Z(R)^{\ast}\setminus\bigcup_{k\in F} V_k.
\]
Combining \cite[Theorem 4.2 and Lemma 4.1]{NNB} (where the partition
appears as $A=\bigcup_{i\in F} A_i$ together with $B$, with our $V_k$
corresponding to their $A_i$ and $V_0$ to their $B$) gives that
$\WG(R)$ is the complete multipartite graph with parts
$\{V_k\}_{k\in F}$ together with each vertex of $V_0$ as a singleton
part. When $V_0\neq\varnothing$, this is the join of a clique on
$V_0$ with the complete $|F|$-partite graph having parts
$\{V_k\}_{k\in F}$, the formulation used in \cite{NNB} (denoted there
$K_M\vee H_k$, with $M=|V_0|$). We restate it as a single complete
multipartite graph (Theorem \ref{thm:structure}), valid uniformly
whether $V_0$ is nonempty or empty, which is the form suited to
spectral computation.

The Laplacian spectrum of $\WG(\Z_n)$ has been determined by Shariq,
Mathil, and Kumar \cite{SMK}, who also proved that $\WG(\Z_n)$ is
Laplacian integral. Their approach uses the divisor-based partition
$\bigsqcup A_{d_i}$, with $A_{d_i}=\{x\in\Z_n : \gcd(x,n)=d_i\}$, and
the generalized-join formula of Cardoso et al.\ \cite{CDMR}. The
present paper extends those results from $\Z_n$ to \emph{arbitrary
finite commutative rings} $R$ by working directly with the local-ring
decomposition and recognizing $\WG(R)$ as a complete multipartite
graph (Theorem \ref{thm:structure}); the Laplacian spectrum then
follows from the multipartite Laplacian polynomial (Theorem
\ref{thm:multipartite-charpoly}) in a single step, in a form that
specializes to \cite{SMK} for $R=\Z_n$. As parallel work for the
zero-divisor graph $\Gamma$, Chattopadhyay, Patra, and Sahoo
\cite{CPS} studied the Laplacian eigenvalues of $\Gamma(\Z_n)$,
proving in particular that $\Gamma(\Z_{p^t})$ is Laplacian integral
and expressing the Laplacian spectral radius and algebraic
connectivity (for most $n$) via a vertex-weighted Laplacian on the
proper-divisor graph of $n$; Mönius \cite{Monius} treated the
adjacency eigenvalues of $\Gamma(R)$ for general finite $R$ via a
graph product.

Combining Theorem \ref{thm:structure} with the Laplacian spectrum of a
complete multipartite graph (Theorem \ref{thm:multipartite-charpoly},
derived here from Mohar's join formula) yields the full closed-form
Laplacian spectrum of $\WG(R)$ (Theorem \ref{thm:main}); this
generalizes \cite[Theorem 4.4]{SMK} from $R=\Z_n$ to arbitrary finite
commutative $R$. Immediate consequences include Laplacian integrality
of $\WG(R)$ for every finite commutative ring $R$ (Corollary
\ref{cor:integral}) and a bound of $|F|+2$ on the number of distinct
Laplacian eigenvalues (Corollary \ref{cor:distinct}). We also derive
explicit closed-form formulas for the algebraic connectivity
(Corollary \ref{cor:alg-conn}) and the number of spanning trees
(Corollary \ref{cor:spanning-trees}) of $\WG(R)$; these follow from
the spectrum and the Matrix-Tree Theorem. As a special case, we
recover the Laplacian spectrum of $\WG(\Z_n)$ in compact closed form
(Theorem \ref{thm:Zn}).

Section \ref{sec:adj} treats the adjacency spectrum of $\WG(R)$,
which in contrast to the Laplacian case is generally not integral.

\medskip
The paper is organized as follows. Section \ref{sec:prelim} fixes
notation and gathers the ring-theoretic and spectral facts we use.
Section \ref{sec:structure} restates the structure theorem in the
unified multipartite form; for completeness we include a streamlined
proof in our notation, since the spectral arguments depend on the
explicit identification of parts. Section \ref{sec:laplacian} derives
the Laplacian spectrum and its consequences. Section \ref{sec:Zn}
specializes to $\Z_n$ and works out small cases. Section \ref{sec:adj}
treats the adjacency spectrum.

\section{Preliminaries}\label{sec:prelim}

\subsection*{Rings.}
All rings are commutative with identity. For a ring $R$, $\U(R)$
denotes its group of units and $\Z(R)$ its set of zero-divisors
(including $0$). We set $\Z(R)^{\ast} = \Z(R)\setminus\{0\}$. The
annihilator of $x\in R$ is $\Ann(x) = \{r\in R : rx=0\}$. We recall
the following standard facts, all of which can be found in
Atiyah--Macdonald \cite{AM}.

\begin{proposition}[{\cite[Theorem 8.7]{AM}}]\label{prop:artinian-decomp}
An Artin ring $R$ is uniquely \textup{(}up to isomorphism\textup{)} a
finite direct product of Artin local rings:
$R\cong\prod_{i=1}^{n} R_i$.
\end{proposition}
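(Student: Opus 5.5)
The natural route is the standard structure theory of Artinian rings, cited from Atiyah--Macdonald, in three steps: build the decomposition from the maximal ideals, identify the factors as local Artin rings, then prove uniqueness.

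\emph{Existence.} First I would show that $R$ has only finitely many maximal ideals $\m_1,\dots,\m_n$. Take the set of all finite intersections of maximal ideals; by the minimum condition it has a minimal element $\m_1\cap\cdots\cap\m_n$. For any maximal $\m$, minimality forces $\m\cap\m_1\cap\cdots\cap\m_n=\m_1\cap\cdots\cap\m_n$. Hence $\m\supseteq\m_1\cdots\m_n$, and primality gives $\m=\m_i$ for some $i$. Next I would show that the Jacobson radical $J=\bigcap_i\m_i$ equals the nilradical (primes in an Artin ring are maximal) and is nilpotent, say $J^k=0$. For nilpotency, the chain $J\supseteq J^2\supseteq\cdots$ stabilizes at some $\mathfrak a=J^k$. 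If $\mathfrak a\neq0$, choose an ideal $\mathfrak b$ minimal with $\mathfrak a\mathfrak b\neq0$; then $\mathfrak b=(x)$ and $x\mathfrak a=(x)$, so $x=xy$ with $y\in\mathfrak a\subseteq J$. Since $1-y$ is a unit, $x=0$, a contradiction. Then $\prod_i\m_i^k\subseteq J^k=0$, and the ideals $\m_i^k$ are pairwise coprime, since powers of coprime ideals are coprime. The Chinese Remainder Theorem therefore gives
\[
R\cong\prod_{i=1}^n R/\m_i^k .
\]
Each factor $R/\m_i^k$ is Artin, being a quotient of an Artin ring. It is also local, since its only maximal ideal is $\m_i/\m_i^k$.

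\emph{Uniqueness.} Suppose $R\cong\prod_{i=1}^m S_i$ with each $S_i$ local Artin with maximal ideal $\mathfrak n_i$, and let $\phi_i\colon R\to S_i$ be the projections. The maximal ideals of the product are exactly $\ker(R\to S_i/\mathfrak n_i)$. So $m=n$, and after reordering these kernels are the $\m_i$. Since $\mathfrak n_i$ is nilpotent, $\mathfrak n_i^{k}=0$ for some $k$, which we may take uniform in $i$. Then $\ker\phi_i=\prod_{j\neq i}S_j$ contains $\m_i^k$, because the image of $\m_i^k$ is $\mathfrak n_i^k=0$. Conversely, $\ker\phi_i$ is contained in $\m_i^k$: for $x\in\ker\phi_i$, let $e_i$ be the idempotent for the $i$-th factor, so $e_i\in\m_j$ for all $j\neq i$ and $x=x(1-e_i)$. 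Here $1-e_i\in\m_i$ is idempotent, hence $1-e_i=(1-e_i)^k\in\m_i^k$, so $x\in\m_i^k$. Thus $S_i\cong R/\m_i^k$, which is intrinsic to $R$ and gives uniqueness up to isomorphism and reordering.

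The main obstacle is the nilpotency of $J$ together with the idempotent bookkeeping in the uniqueness step. Since the paper only uses this as a cited standard fact, I would simply invoke \cite[Theorem 8.7]{AM} rather than reprove it in full.
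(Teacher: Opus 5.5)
The paper gives no proof here; it simply cites Atiyah--Macdonald, Theorem 8.7. Your sketch is correct.

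Your existence argument is exactly theirs: finitely many maximal ideals, nilpotency of $J$ (your $1-y$ unit step is equivalent to their use of nilpotent $y$), coprimality of the $\m_i^k$, and the Chinese Remainder Theorem.

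For uniqueness you take a genuinely different route. Atiyah--Macdonald note that the kernels $\mathfrak{a}_i=\ker\phi_i$ are $\m_i$-primary, pairwise coprime, and intersect to $0$. So they form a primary decomposition of the zero ideal whose associated primes are all isolated, and the second uniqueness theorem for isolated primary components then pins down each $\mathfrak{a}_i$. You instead compute $\ker\phi_i=\m_i^k$ directly, using the idempotent $1-e_i\in\m_i$. Your version avoids primary decomposition entirely and identifies each factor concretely as a quotient $R/\m_i^k$, matching your existence construction. Theirs is shorter once the machinery of Chapter 4 is available.

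One point deserves an explicit sentence. The exponent $k$ depends on the given decomposition, through the nilpotency indices of the $\mathfrak{n}_i$, so ``intrinsic to $R$'' needs a justification. Your argument gives $\ker\phi_i=\m_i^k$ for \emph{every} $k$ at least the nilpotency index of $\mathfrak{n}_i$. Hence $\ker\phi_i$ is the stable value of the chain $\m_i\supseteq\m_i^2\supseteq\cdots$, which depends only on $R$. Alternatively, compare two given decompositions using a single $k$ large enough for both.
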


\begin{proposition}\label{prop:local-nonunits}
A ring $R$ is local if and only if the set of its non-units is an
ideal; in that case this set is the unique maximal ideal of $R$.
\end{proposition}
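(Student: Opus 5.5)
We prove both directions directly from the definitions, using only that every proper ideal lies in some maximal ideal (Zorn's lemma, or finiteness in the Artinian setting of interest). Let $N = R \setminus \U(R)$ denote the set of non-units.

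\emph{Forward direction.} Suppose $R$ is local with unique maximal ideal $\m$.
\begin{itemize}[leftmargin=*]
\item If $x\in N$, then $xR$ is a proper ideal, since $1\in xR$ would make $x$ a unit.
\item Hence $xR$ is contained in some maximal ideal, which must be $\m$, so $x\in\m$. This shows $N\subseteq\m$.
\item Conversely, $\m$ is proper, so it contains no unit, giving $\m\subseteq N$.
\end{itemize}
Thus $N=\m$ is an ideal, and it is the unique maximal ideal.

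\emph{Reverse direction.} Suppose $N$ is an ideal.
\begin{itemize}[leftmargin=*]
\item $N$ is proper, since $1\notin N$.
\item Any proper ideal $I$ contains no unit, so $I\subseteq N$.
\item In particular every maximal ideal is contained in $N$, and hence equals $N$ by maximality, because $N$ is itself proper.
\item A maximal ideal exists, so $N$ is the unique maximal ideal.
\end{itemize}
Therefore $R$ is local, with maximal ideal $N$.

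The only mildly delicate point is the existence of a maximal ideal containing a given proper ideal. This is the standard Zorn's lemma argument, and in the finite setting of this paper it is immediate, since one can simply take a maximal element among the finitely many proper ideals containing $xR$. No other step poses any obstacle.
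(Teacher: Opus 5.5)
Your proof is correct and follows the paper's argument. The forward direction is the same, except that you unpack Atiyah--Macdonald's Corollary 1.5: a non-unit $x$ generates a proper ideal $xR$, which lies in a maximal ideal, necessarily $\m$. Your reverse direction is exactly the proof of Atiyah--Macdonald's Proposition 1.6(i), which the paper cites rather than reproducing.
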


\begin{proof}
Suppose $R$ is local with maximal ideal $\m$. By
\cite[Corollary 1.5]{AM} every non-unit of $R$ lies in some maximal
ideal, hence in $\m$ (the only one); and no element of $\m$ is a
unit, since $\m\neq(1)$. Thus the set of non-units equals $\m$, an
ideal. Conversely, if the non-units form an ideal $\m$, then
$\m\neq(1)$ and every element of $R\setminus\m$ is a unit, so $R$ is
local with maximal ideal $\m$ by \cite[Proposition 1.6(i)]{AM}.
\end{proof}

\begin{lemma}\label{lem:m-is-nilradical}
Let $(R,\m)$ be an Artin local ring, and let
$\mathfrak{N}(R)=\{\,x\in R : x^{n}=0\text{ for some }n\geq 1\,\}$
denote the nilradical of $R$, that is, the set of nilpotent elements.
Then $\m$ is the unique prime ideal of $R$, and $\m=\mathfrak{N}(R)$.
\end{lemma}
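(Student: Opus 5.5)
The plan has two parts: first show that $\m$ is the only prime ideal, then identify it with the nilradical, using standard facts from Atiyah--Macdonald.

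\emph{Step 1: $\m$ is the unique prime.} In an Artin ring every prime ideal is maximal \cite[Proposition 8.1]{AM}. The route is short. If $\mathfrak p$ is prime, then $R/\mathfrak p$ is an Artinian integral domain. For nonzero $x$ in such a domain, the chain $(x)\supseteq(x^2)\supseteq\cdots$ stabilizes, so $x^n=x^{n+1}y$ for some $y$. Cancelling $x^n$ gives $xy=1$, so $R/\mathfrak p$ is a field. Hence every prime of $R$ is maximal. Since $R$ is local, the only maximal ideal is $\m$, so $\m$ is the unique prime ideal.

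\emph{Step 2: $\m=\mathfrak N(R)$.} The nilradical equals the intersection of all prime ideals \cite[Proposition 1.8]{AM}. By Step 1 this intersection is just $\m$, so $\mathfrak N(R)=\m$.

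For a more self-contained check of the nontrivial inclusion $\m\subseteq\mathfrak N(R)$, one could argue directly as follows. The reverse inclusion $\mathfrak N(R)\subseteq\m$ is immediate, since nilpotents are non-units and Proposition \ref{prop:local-nonunits} says the non-units form $\m$. For the forward inclusion, take $x\in\m$ that is not nilpotent. The set $S=\{1,x,x^2,\dots\}$ is multiplicatively closed and avoids $0$. A maximal ideal disjoint from $S$ is prime, so it must be $\m$; but $x\in\m$ meets $S$, a contradiction.

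The main obstacle is Step 1, the fact that primes are maximal in Artin rings; everything else is formal. I will cite \cite[Proposition 8.1]{AM} and, if needed, include the domain argument above as justification.
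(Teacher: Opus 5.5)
Your proposal is correct and follows the paper's proof: every prime of an Artin ring is maximal by \cite[Proposition 8.1]{AM}, so $\m$ is the unique prime, and then $\mathfrak{N}(R)=\m$ by \cite[Proposition 1.8]{AM}; your Artinian-domain argument is just the standard proof of the cited proposition. In your optional direct check, the object you need is an ideal maximal among those disjoint from $S$, not ``a maximal ideal disjoint from $S$''; such an ideal exists by Zorn's lemma since $(0)\cap S=\varnothing$, and it is prime, which gives the contradiction you describe.
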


\begin{proof}
In an Artin ring every prime ideal is maximal
\cite[Proposition 8.1]{AM}. Since $R$ is local it has exactly one
maximal ideal, namely $\m$; hence $\m$ is the only prime ideal of $R$.
By \cite[Proposition 1.8]{AM} the nilradical is the intersection of
all prime ideals of $R$, which here is the single ideal $\m$.
Therefore $\m=\mathfrak{N}(R)$.
\end{proof}

\begin{proposition}\label{prop:zd=nonunit}
Let $(R,\m)$ be an Artin local ring. Then an element of $R$ is a
non-unit if and only if it is a zero-divisor; consequently
$\m=\Z(R)=R\setminus\U(R)$.
\end{proposition}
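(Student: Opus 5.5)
We show the two inclusions $\Z(R)\subseteq R\setminus\U(R)$ and $R\setminus\U(R)\subseteq\Z(R)$. The identification with $\m$ then follows from Proposition \ref{prop:local-nonunits}, which gives $\m=R\setminus\U(R)$.

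The first inclusion holds in any commutative ring. Suppose $u$ is a unit and $ru=0$. Multiplying by $u^{-1}$ gives $r=0$, so $\Ann(u)=0$ and $u$ is not a zero-divisor. Here we use the convention that $0\in\Z(R)$ and that a nonzero $x$ is a zero-divisor when $\Ann(x)\neq 0$; note $0$ is itself a non-unit, since $R\neq 0$ as $R$ is local.

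For the reverse inclusion, let $x$ be a non-unit. Then $x\in\m$ by Proposition \ref{prop:local-nonunits}. By Lemma \ref{lem:m-is-nilradical} we have $\m=\mathfrak{N}(R)$, so $x^n=0$ for some $n\geq 1$. If $x=0$ there is nothing to prove. Otherwise choose the least $k\geq 1$ with $x^k=0$; then $k\geq 2$ and $x^{k-1}\neq 0$. Since $x\cdot x^{k-1}=0$, the element $x^{k-1}$ is a nonzero element of $\Ann(x)$, so $x$ is a zero-divisor.

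Combining the two inclusions gives $\Z(R)=R\setminus\U(R)=\m$. The only nontrivial input is the nilpotence of elements of $\m$, which is exactly Lemma \ref{lem:m-is-nilradical}, so the argument has no real obstacle. The one point needing care is the minimality choice of $k$, which guarantees that the annihilating element $x^{k-1}$ is nonzero.
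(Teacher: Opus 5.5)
Your proposal is correct and follows essentially the same route as the paper's proof: a non-unit lies in $\m=\mathfrak{N}(R)$, so it is nilpotent, and for $x\neq 0$ the minimal exponent $k\geq 2$ yields the nonzero annihilator $x^{k-1}$. The converse (a zero-divisor is never a unit) and the identification with $\m$ via Proposition \ref{prop:local-nonunits} are handled as in the paper; you are just slightly more explicit that $0$ is a non-unit because $R\neq 0$.
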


\begin{proof}
Let $x$ be a non-unit. If $x=0$ then $x\in\Z(R)$ by convention.
Suppose $x\neq 0$. By Proposition \ref{prop:local-nonunits},
$x\in\m$, and by Lemma \ref{lem:m-is-nilradical},
$\m=\mathfrak{N}(R)$, so $x$ is nilpotent. Let $s\geq 2$ be minimal
with $x^{s}=0$ (here $s\geq 2$ since $x\neq 0$). Then
$x\cdot x^{s-1}=0$ with $x^{s-1}\neq 0$, so $x$ is a zero-divisor.

Conversely, a zero-divisor is never a unit, so every zero-divisor is
a non-unit.

Hence the non-units are exactly the zero-divisors, and by Proposition
\ref{prop:local-nonunits} this set is $\m$; that is,
$\m=\Z(R)=R\setminus\U(R)$.
\end{proof}

\begin{proposition}\label{prop:nilpotent}
Let $(R,\m)$ be an Artin local ring. Then $\m$ is nilpotent: there is
a smallest integer $t\geq 1$ with $\m^{t}=0$, called the
\emph{nilpotency index} of $\m$. Moreover $R$ is a field if and only
if $t=1$.
\end{proposition}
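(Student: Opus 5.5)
We will prove the two assertions in turn: first that $\m$ is nilpotent, then that $t=1$ characterizes fields.

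\textbf{Nilpotency of $\m$.} The quickest route is to quote \cite[Proposition 8.4]{AM}: in an Artin ring the nilradical is nilpotent. By Lemma \ref{lem:m-is-nilradical}, $\m=\mathfrak{N}(R)$, so some power $\m^{k}$ vanishes. If we want to avoid that citation in the finite case, we can argue directly instead. Since $R$ is Noetherian (an Artin ring is Noetherian, \cite[Theorem 8.5]{AM}), or trivially finitely generated when $R$ is finite, we may write $\m=(a_1,\dots,a_m)$. Each $a_j$ is nilpotent by Lemma \ref{lem:m-is-nilradical}, say $a_j^{e_j}=0$. Put $k=\sum_j (e_j-1)+1$. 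Then $\m^{k}$ is generated by the monomials $a_1^{c_1}\cdots a_m^{c_m}$ with $\sum c_j=k$. By pigeonhole, each such monomial has some $c_j\ge e_j$, so every generator vanishes and $\m^{k}=0$. In either case the set of $k\ge 1$ with $\m^{k}=0$ is nonempty, so by well-ordering it has a least element $t\ge 1$.

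\textbf{$R$ is a field iff $t=1$.} Note that $t=1$ means exactly $\m=0$. If $R$ is a field, its only proper ideal is $0$, so $\m=0$ and $t=1$. Conversely, suppose $t=1$, so $\m=0$. By Proposition \ref{prop:local-nonunits}, $\m$ is the set of non-units. Hence every nonzero element is a unit. Also $1\neq 0$, since $\m\neq(1)$ forces $R\neq 0$. Therefore $R$ is a field.

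\textbf{Main obstacle.} The only nontrivial step is passing from ``every element of $\m$ is nilpotent'' to ``$\m$ itself is nilpotent''. This is handled either by the Atiyah--Macdonald citation or by the finite-generation pigeonhole argument above. The rest is bookkeeping.
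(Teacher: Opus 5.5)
Your proof is correct and follows the paper's route: nilpotency of $\m$ comes from \cite[Proposition 8.4]{AM} combined with Lemma~\ref{lem:m-is-nilradical}, $t$ exists by well-ordering, and the field characterization is $t=1\iff\m=0$ together with the fact that $\m$ is exactly the set of non-units. Your optional pigeonhole argument is also valid and makes the key step self-contained. It writes $\m=(a_1,\dots,a_m)$ with $a_j^{e_j}=0$ and concludes $\m^{\sum_j(e_j-1)+1}=0$, trading the citation for Noetherianity of $R$, which is automatic when $R$ is finite.
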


\begin{proof}
By \cite[Proposition 8.4]{AM} the nilradical of an Artin ring is
nilpotent. By Lemma \ref{lem:m-is-nilradical}, $\m=\mathfrak{N}(R)$,
so $\m$ is nilpotent; let $t\geq 1$ be smallest with $\m^{t}=0$. If
$t=1$ then $\m=0$, so every nonzero element of $R$ is a unit and $R$
is a field; conversely, if $R$ is a field then $\m=0$ and $t=1$.
\end{proof}

\begin{remark}\label{rem:finite-artinian}
A finite ring has only finitely many ideals, so it trivially
satisfies the descending chain condition and is Artin; hence the
results \ref{prop:artinian-decomp}--\ref{prop:nilpotent} above apply
to every finite ring.
\end{remark}

\subsection*{The Laplacian matrix.}
Let $G=(V,E)$ be a simple graph on $|V|=N$ vertices. The
\emph{Laplacian matrix} of $G$ is $L(G) = D(G) - A(G)$, where $A(G)$ is
the adjacency matrix and $D(G)$ is the diagonal matrix of vertex
degrees. The matrix $L(G)$ is symmetric and positive semidefinite, so
its eigenvalues are real and nonnegative; we denote them
$0=\mu_1(G)\leq\mu_2(G)\leq\cdots\leq\mu_N(G)$. The multiplicity of $0$
equals the number of connected components of $G$. We call $G$
\emph{Laplacian integral} if every $\mu_i(G)$ is an integer.

We will use Mohar's formula for the Laplacian characteristic
polynomial of a graph join. Recall that the \emph{join} $G_1\vee G_2$
of two graphs is the graph obtained from their disjoint union by
adding all edges $uv$ with $u\in V(G_1)$ and $v\in V(G_2)$. (Mohar
writes this operation as $G_1\ast G_2$; it is exactly the join $\vee$
used here.)

\begin{theorem}[Mohar {\cite[Corollary 3.7]{Mohar}}]\label{thm:mohar}
Let $G_1,G_2$ be graphs on $n_1,n_2$ vertices respectively. Then
\[
\mu_L(G_1\vee G_2,\,x) \;=\;
\frac{x(x-n_1-n_2)}{(x-n_1)(x-n_2)}\,
\mu_L(G_1,\,x-n_2)\,\mu_L(G_2,\,x-n_1).
\]
\end{theorem}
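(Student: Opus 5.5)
The plan is to compute the Laplacian spectrum of $G_1\vee G_2$ directly from an explicit eigenbasis, and then read off the characteristic polynomial. Order the vertices so that those of $G_1$ come first. Since every vertex of $G_1$ gains exactly $n_2$ new neighbours in the join, and every vertex of $G_2$ gains $n_1$, the Laplacian has the block form
\[
L(G_1\vee G_2)=\begin{pmatrix} L(G_1)+n_2 I_{n_1} & -J_{n_1\times n_2}\\ -J_{n_2\times n_1} & L(G_2)+n_1 I_{n_2}\end{pmatrix},
\]
where $J$ denotes an all-ones matrix. Write $\mathbf 1_{m}$ for the all-ones vector of length $m$. The argument uses the decomposition $\mathbb R^{n_1+n_2}=(\mathbf 1_{n_1}^{\perp}\oplus 0)\oplus(0\oplus\mathbf 1_{n_2}^{\perp})\oplus\mathrm{span}\{(\mathbf 1_{n_1},0),(0,\mathbf 1_{n_2})\}$.

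\emph{Step 1.} Since $L(G_1)$ is symmetric with $L(G_1)\mathbf 1_{n_1}=0$, it preserves $\mathbf 1_{n_1}^{\perp}$. Hence we can choose an orthonormal eigenbasis $u_2,\dots,u_{n_1}$ of $\mathbf 1_{n_1}^\perp$ with eigenvalues $\lambda_2,\dots,\lambda_{n_1}$, so that the spectrum of $L(G_1)$ is $\{0,\lambda_2,\dots,\lambda_{n_1}\}$ as a multiset. This remains valid if $G_1$ is disconnected, since we remove exactly one copy of the eigenvalue $0$. Because $J_{n_2\times n_1}u_i=0$, the vector $(u_i,0)$ is an eigenvector of the join Laplacian with eigenvalue $\lambda_i+n_2$. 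Symmetrically, the eigenvectors $v_j\in\mathbf 1_{n_2}^\perp$ of $L(G_2)$ with eigenvalues $\lambda'_j$ give eigenvalues $\lambda'_j+n_1$.

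\emph{Step 2.} On the two-dimensional span of $(\mathbf 1_{n_1},0)$ and $(0,\mathbf 1_{n_2})$, the matrix acts as $\begin{pmatrix} n_2 & -n_2\\ -n_1 & n_1\end{pmatrix}$ in these coordinates. Its eigenvalues are $0$ and $n_1+n_2$. Together with Step 1, this accounts for all $n_1+n_2$ eigenvalues, because the three subspaces are mutually orthogonal and span the whole space.

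\emph{Step 3.} Write $\mu_L(G_1,x)=x\prod_{i\ge2}(x-\lambda_i)$. Substituting gives
\[
\mu_L(G_1,x-n_2)=(x-n_2)\prod_{i\ge2}\bigl(x-(\lambda_i+n_2)\bigr),
\]
so dividing by $(x-n_2)$ leaves exactly the factors coming from Step 1. The same holds for $G_2$ after dividing by $(x-n_1)$. Multiplying by the factor $x(x-n_1-n_2)$ from Step 2 yields the stated formula, which is an identity of rational functions that are in fact polynomials.

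The only delicate point is bookkeeping the single removed zero eigenvalue when $G_i$ is disconnected. Restricting to $\mathbf 1^\perp$, rather than discarding "the" zero eigenvector, handles this cleanly. The degenerate cases $n_i=0$ need no separate argument, because they are never used in the paper.
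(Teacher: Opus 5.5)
Your proof is correct, and it takes a different route from the paper. The paper gives no proof of this statement; it imports it from Mohar.

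Your steps all check out:
\begin{itemize}
\item the block form of $L(G_1\vee G_2)$;
\item the invariance and mutual orthogonality of $\mathbf 1_{n_1}^{\perp}\oplus 0$, $0\oplus\mathbf 1_{n_2}^{\perp}$ and $\mathrm{span}\{(\mathbf 1_{n_1},0),(0,\mathbf 1_{n_2})\}$;
\item the $2\times 2$ block with trace $n_1+n_2$ and determinant $0$;
\item the removal of exactly one factor $x$ from each $\mu_L(G_i,x)$, even when $G_i$ is disconnected, which is precisely what the shifted denominators $(x-n_2)$ and $(x-n_1)$ cancel.
\end{itemize}

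The standard derivation of Mohar's corollary goes through complements. One uses three facts: $\overline{G_1\vee G_2}=\overline{G_1}\cup\overline{G_2}$; the Laplacian polynomial is multiplicative over disjoint unions; and $L(G)+L(\overline G)=nI-J$. The last identity shows that an $n$-vertex graph with Laplacian eigenvalues $0,\lambda_2,\dots,\lambda_n$ has complement with eigenvalues $0,n-\lambda_2,\dots,n-\lambda_n$. Applying this once to the join and once to each factor gives the formula by a short polynomial manipulation.

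That route is quicker once the complement theorem is available, but it never exhibits eigenvectors. Yours is self-contained and produces an explicit eigenbasis of the join. The spectrum can then be read off directly as $\{0,\,n_1+n_2\}\cup\{\lambda_i+n_2\}\cup\{\lambda'_j+n_1\}$.

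This arguably fits the paper better. Your zero-sum-on-a-part and constant-on-parts vectors are the same ones the paper uses in Section~\ref{sec:adj} for the adjacency spectrum. Applied directly to $K(p_1,\dots,p_n)$, they would give Theorem~\ref{thm:multipartite-charpoly} with no induction at all:
\begin{itemize}
\item zero-sum vectors supported on part $i$ have eigenvalue $N-p_i$;
\item the $n$-dimensional space of constant-on-parts vectors contributes $0$ once and $N$ with multiplicity $n-1$.
\end{itemize}
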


\section{Structure of \texorpdfstring{$\WG(R)$}{WGamma(R)}}\label{sec:structure}

Throughout this section $R$ denotes a finite commutative ring with
identity and $R\cong\prod_{i=1}^{n} R_i$ its decomposition into a
product of finite local rings (Proposition \ref{prop:artinian-decomp}).
We identify each $x\in R$ with its tuple $(x_1,\ldots,x_n)$, where
$x_i\in R_i$. We write $\m_i$ for the maximal ideal of $R_i$ and
$t_i$ for its nilpotency index, which is well defined by Proposition
\ref{prop:nilpotent}; recall from that proposition that $R_i$ is a
field if and only if $t_i=1$. Let $F$ be the index set
\[
F = \{\, i\in\{1,\ldots,n\} : R_i \text{ is a field}\,\},
\]
and define, for each $k\in F$,
\[
V_k = \{\, x\in\Z(R)^{\ast} : x_k=0 \text{ and } x_i\in\U(R_i)\text{ for all } i\neq k\,\},
\]
and
\[
V_0 = \Z(R)^{\ast}\setminus\bigcup_{k\in F} V_k.
\]
We call coordinate $i$ a \emph{zero-divisor coordinate} of
$x=(x_1,\ldots,x_n)$ if $x_i\in\Z(R_i)$.

\subsection*{An annihilator lemma.}

\begin{lemma}\label{lem:annihilator}
Let $(R,\m)$ be an Artin local ring that is not a field. By
Proposition \ref{prop:nilpotent} its nilpotency index satisfies
$t\geq 2$, and $\m^{\,t-1}\neq 0$ by minimality of $t$. Any nonzero
$u\in\m^{\,t-1}$ satisfies:
\begin{enumerate}[label=\textup{(\arabic*)}]
\item $u^2=0$;
\item $ux=0$ for every zero-divisor $x\in R$.
\end{enumerate}
\end{lemma}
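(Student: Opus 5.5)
The plan is to derive both parts from the single inclusion $u\,\m\subseteq\m^{t}=0$, so that everything reduces to identifying the relevant elements as members of $\m$.

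First, since $R$ is not a field, Proposition \ref{prop:nilpotent} gives $t\geq 2$. By minimality of $t$ we have $\m^{t-1}\neq 0$, so a nonzero $u\in\m^{t-1}$ exists. Because $t-1\geq 1$, we also have $\m^{t-1}\subseteq\m$, and therefore $u\in\m$.

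For part (2), let $x$ be any zero-divisor of $R$. By Proposition \ref{prop:zd=nonunit}, $x\in\m$. The product $ux$ lies in $\m^{t-1}\cdot\m=\m^{t}$, because products of an element of $\m^{t-1}$ with an element of $\m$ are among the generators of $\m^{t}$. Since $\m^{t}=0$, we conclude $ux=0$.

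Part (1) is then the special case $x=u$. This is allowed because $u\in\m=\Z(R)$ by Proposition \ref{prop:zd=nonunit}, so $u$ is itself a zero-divisor, and part (2) gives $u^2=0$.

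The only point needing care is the inclusion $\m^{t-1}\subseteq\m$. It requires $t-1\geq 1$, and this is exactly where the hypothesis that $R$ is not a field enters. If $t$ were $1$, then $\m^{0}=R$ and $u$ need not lie in $\m$. Apart from this, every step is immediate from the definitions.
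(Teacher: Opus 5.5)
Your proof is correct and essentially the paper's argument: (2) is identical (zero-divisors lie in $\m$, so $ux\in\m^{t-1}\m=\m^{t}=0$). For (1) you specialize (2) to $x=u$ using $u\in\m^{t-1}\subseteq\m$, whereas the paper notes $u^2\in\m^{2(t-1)}\subseteq\m^{t}$ since $2(t-1)\geq t$; both rest on $t\geq 2$ in the same way, so the difference is cosmetic.
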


\begin{proof}
By Proposition \ref{prop:zd=nonunit} every zero-divisor lies in $\m$.
For $x\in\m$, $ux\in\m^{\,t-1}\cdot\m=\m^{\,t}=0$, proving (2). For (1),
$u\in\m^{\,t-1}$ gives $u^2\in\m^{\,2(t-1)}$. Since $t\geq 2$, we have
$2(t-1)\geq t$, so $\m^{\,2(t-1)}\subseteq\m^{\,t}=0$, and $u^2=0$.
\end{proof}

\subsection*{The multipartite structure.}

The following structural description of $\WG(R)$ is
\cite[Theorem 4.2 and Lemma 4.1]{NNB}. In \cite{NNB} it is established
in the course of computing $\omega(\WG(R))$ and $\chi(\WG(R))$, and is
phrased there as the join of a clique on $V_0$ with the complete
$|F|$-partite graph having parts $\{V_k\}_{k\in F}$ (denoted
$K_M\vee H_k$ with $M=|V_0|$). We restate it as a single complete
multipartite graph, identifying the parts explicitly with the sets
$V_k$ and $V_0$ and treating the cases $V_0\neq\varnothing$ and
$V_0=\varnothing$ uniformly; we include a streamlined proof in our
notation, since the spectral computations in subsequent sections
depend on this explicit identification.

\begin{theorem}\label{thm:structure}
Let $R$ be a finite commutative ring with $\WG(R)$ nonempty
\textup{(}the case $\Z(R)^{\ast}=\varnothing$, i.e.\ $R$ a field, being
trivial\textup{)}. Then $\WG(R)$ is the complete
multipartite graph whose parts are $\{V_k : k\in F\}$ together with each
vertex of $V_0$ as a singleton part. Equivalently:
\begin{enumerate}[label=\textup{(\arabic*)}]
\item $\WG(R)[V_k]$ is an independent set for each $k\in F$;
\item $\WG(R)[V_0]$ is a clique;
\item for each $k\in F$, every vertex of $V_k$ is adjacent to every
vertex of $\Z(R)^{\ast}\setminus V_k$.
\end{enumerate}
\end{theorem}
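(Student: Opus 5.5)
We prove (1), (2), (3) directly from the definition of adjacency. Work coordinatewise: for $x\in R$, $\Ann(x)=\prod_i \Ann_{R_i}(x_i)$, where $\Ann_{R_i}(x_i)=R_i$ if $x_i=0$, $0$ if $x_i\in\U(R_i)$, and a nonzero ideal otherwise (Proposition \ref{prop:zd=nonunit}). Then $x\sim y$ iff $x\neq y$ and there are nonzero $r\in\Ann(x)$, $s\in\Ann(y)$ with $r_is_i=0$ for all $i$. The key tool is Lemma \ref{lem:annihilator}: in a non-field factor $R_i$, a nonzero $u_i\in\m_i^{t_i-1}$ kills every element of $\m_i$ and satisfies $u_i^2=0$, so $u_i$ lies in every annihilator of a zero-divisor coordinate.

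\textbf{Step (3) and part of (2): a sufficient criterion for adjacency.} Call $x$ and $y$ \emph{separable} if there are indices $i,j$ (possibly equal) such that $r$ can be nonzero only at $i$, $s$ can be nonzero only at $j$, and $r_is_j$-type products vanish. Concretely, we choose $r$ supported on one coordinate and $s$ supported on one coordinate:
\begin{itemize}
\item[(a)] If $x_i=0$ and $y_j=0$ with $i\ne j$, take $r=e_i$ and $s=e_j$; then $rs=0$.
\item[(b)] If $x$ and $y$ have a zero-divisor coordinate in a common non-field factor $i$, take $r=s=u_ie_i$, using $u_i^2=0$.
\item[(c)] If $x_i$ is a zero-divisor coordinate in a non-field factor $i$ and $y_j=0$ with $j\neq i$, take $r=u_ie_i$ and $s=e_j$.
\end{itemize}

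Now check (3). Take $x\in V_k$ and $y\notin V_k$, $y\in\Z(R)^\ast$. Since $y$ is a nonunit, it has some zero-divisor coordinate $j$. If $j\neq k$ and $R_j$ is a field, then $y_j=0$ and (a) applies with $r=e_k$. If $R_j$ is not a field, take $r=e_k$ and $s=u_je_j$, and note $x_j\in\U(R_j)$ is not needed since only $rs$ matters. If the only zero-divisor coordinate of $y$ is $k$ itself, then $y_k=0$ and all other coordinates of $y$ are units, so $y\in V_k$, a contradiction.

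Next check (2). A vertex $x\in V_0$ either has a zero-divisor coordinate in a non-field factor, or has at least two zero coordinates among field factors. In the second case, for any other vertex $y$ we can choose $r=e_i$ with $i$ avoiding $y$'s chosen coordinate, since $x$ has two choices. In the first case, use $u_ie_i$, which lies in $\Ann(x)$ and is orthogonal to any $s$ supported at $j\ne i$, and squares to zero when $j=i$. So every vertex of $V_0$ is adjacent to all others, which gives (2) and also the $V_0$–$V_k$ edges.

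\textbf{Step (1), the main obstacle: non-adjacency inside $V_k$.} For $x,y\in V_k$, every coordinate other than $k$ is a unit, so $\Ann(x)=\Ann(y)=e_kR=R_k$, which is a field. Any nonzero $r,s$ are therefore nonzero elements of the field $R_k$ placed at coordinate $k$, and $rs\neq 0$. Hence $V_k$ is independent.

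The step requiring the most care is the case analysis for $V_0$: one must verify that every $x\in V_0$ falls into one of the two cases. Since $x\notin\bigcup V_k$ and $x$ is a nonzero nonunit, either some zero-divisor coordinate lies in a non-field factor, or all of its zero-divisor coordinates are zeros in field factors and there are at least two of them (exactly one would put $x$ in some $V_k$). Finally, the sets $V_k$ are pairwise disjoint and disjoint from $V_0$, so (1)–(3) together give exactly the stated complete multipartite graph.
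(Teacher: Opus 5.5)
Your proof is correct. Step (1) is the paper's argument. Step (3) is close to it: for $y\notin V_k$ you find a zero-divisor coordinate $j\neq k$ of $y$ and pair $e_k$ with an annihilator of $y$ supported at $j$. The paper does the same, though it treats $V_k$--$V_\ell$ separately.

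Where you genuinely differ is in (2). The paper argues pairwise. For $x,y\in V_0$ it compares the zero-divisor coordinate sets $A,B$ and splits into four cases: $A\cap B=\varnothing$; $A$ and $B$ incomparable; $A\subseteq B$ with $|A|=1$, where it must prove the common coordinate is a non-field and use $u_i$; and $A\subseteq B$ with $|A|\geq 2$. The $V_k$--$V_0$ edges are then handled again in (3).

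You instead classify a single vertex $x\in V_0$ intrinsically: either it has a zero-divisor coordinate $i$ in a non-field factor, or it has at least two zero coordinates in field factors. You then show that in either case $x$ is adjacent to \emph{every} other vertex of $\Z(R)^{\ast}$. In the first case $u_ie_i$ works against any $y$. In the second, one of $e_{i_1},e_{i_2}$ avoids the support of the annihilator chosen for $y$. This one-sided argument is shorter and removes the nested-case bookkeeping. It gives (2) and the $V_0$--$V_k$ edges at once, and it explains why every vertex of $V_0$ is a dominating vertex. The paper's pairwise version exhibits explicit witnesses for each pair, but repeats work between (2) and (3).

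Two points to state precisely in a write-up:
\begin{enumerate}
\item In your first case, when $y$'s chosen coordinate equals $i$, you must take $s=u_ie_i$, which is legitimate since $y_i\in\m_i$. An arbitrary annihilator at $i$ will not do: for example, $s=e_i$ when $y_i=0$ gives $rs=u_ie_i\neq 0$.
\item Your list (a)--(c) omits the case of nonzero zero-divisor coordinates in two distinct non-field factors. Your Step (2) text (``orthogonal to any $s$ supported at $j\neq i$'') does cover it.
\end{enumerate}
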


\begin{proof}
For $x=(x_1,\ldots,x_n)$ and $i\in\{1,\ldots,n\}$, write
$\Ann_i(x_i)=\{r_i\in R_i : r_i x_i=0\}$. Under the product
decomposition $R\cong\prod R_i$, the annihilator of $x$ factors
coordinate-wise: $\Ann(x)=\prod_{i=1}^{n}\Ann_i(x_i)$, i.e.\ $r\in\Ann(x)$
iff $r_i x_i = 0$ for every $i$.

\medskip\noindent
\textbf{(1) Independence of $V_k$.} Fix $k\in F$ and distinct
$x,y\in V_k$. Then $x_k=y_k=0$ and $x_i,y_i\in\U(R_i)$ for $i\neq k$, so
$\Ann_i(x_i)=\Ann_i(y_i)=\{0\}$ for $i\neq k$ and
$\Ann_k(x_k)=\Ann_k(y_k)=R_k$. Hence
\[
\Ann(x) = \Ann(y) = \{\,r\in R : r_i=0\text{ for }i\neq k,\;r_k\in R_k\,\}.
\]
For any nonzero $r\in\Ann(x)$ and $s\in\Ann(y)$ we have
$r_k,s_k\in R_k\setminus\{0\}=\U(R_k)$ since $R_k$ is a field. Thus
$(rs)_k=r_k s_k\neq 0$, and $rs\neq 0$. Hence $x,y$ are non-adjacent.

\medskip\noindent
\textbf{(2) $V_0$ is a clique.} Let $x,y$ be distinct vertices of
$V_0$ and set
\[
A=\{\,i : x_i\in\Z(R_i)\,\},\qquad B=\{\,i : y_i\in\Z(R_i)\,\}.
\]
Both $A$ and $B$ are nonempty since $x,y$ are zero-divisors. We
construct nonzero $r\in\Ann(x)$, $s\in\Ann(y)$ with $rs=0$.

\emph{Case 1: $A\cap B=\varnothing$.} Choose $i\in A$, $j\in B$, and
nonzero $\alpha_i\in\Ann_i(x_i)$, $\beta_j\in\Ann_j(y_j)$ (such
$\alpha_i$ exists since $x_i\in\Z(R_i)$: if $x_i=0$ take $\alpha_i=1$,
otherwise a nonzero annihilator exists by definition of zero-divisor;
similarly for $\beta_j$). Define $r$ by
$r_i=\alpha_i$ and zeros elsewhere; define $s$ by $s_j=\beta_j$ and
zeros elsewhere. Then $r\in\Ann(x)$, $s\in\Ann(y)$ are nonzero, and
since $i\neq j$, $(rs)_\ell=0$ for all $\ell$, so $rs=0$.

\emph{Case 2: $A\cap B\neq\varnothing$.} If both $A\setminus B$ and
$B\setminus A$ are nonempty, pick $i\in A\setminus B$ and
$j\in B\setminus A$; then $i\neq j$ and the construction of Case 1
applies verbatim. Otherwise, without loss of generality
$A\subseteq B$.

\quad\emph{Case 2a: $|A|=1$,} say $A=\{i\}$. We claim $i\notin F$.
Indeed, if $i\in F$, then $R_i$ is a field, so $x_i\in\Z(R_i)$ forces
$x_i=0$. For each $j\neq i$ we have $j\notin A$, hence
$x_j\notin\Z(R_j)$; since $R_j$ is Artin local, Proposition
\ref{prop:zd=nonunit} applied to $R_j$ gives $x_j\in\U(R_j)$. So
$x_i=0$ and $x_j\in\U(R_j)$ for all $j\neq i$, which together with
$i\in F$ gives $x\in V_i$, contradicting $x\in V_0$. So $R_i$ is local
but not a field; by Proposition \ref{prop:nilpotent}, $t_i\geq 2$, and
$\m_i^{\,t_i-1}\neq 0$ by minimality of $t_i$.
Pick a nonzero $u_i\in\m_i^{\,t_i-1}$. By Lemma \ref{lem:annihilator}(2),
$u_i$ annihilates every zero-divisor of $R_i$, in particular $u_i x_i=0$
and $u_i y_i=0$ (the latter because $i\in A\subseteq B$, so
$y_i\in\Z(R_i)$). Define $r$ by $r_i=u_i$, zeros elsewhere, and $s$
similarly. Then $r\in\Ann(x)$, $s\in\Ann(y)$, both nonzero. Both
are supported only at coordinate $i$, so $rs$ is also supported only
at $i$, with $(rs)_i=u_i^{2}=0$ by Lemma \ref{lem:annihilator}(1).
Hence $rs=0$.

\quad\emph{Case 2b: $|A|\geq 2$.} Pick distinct $i,j\in A\subseteq B$,
and nonzero $\alpha_i\in\Ann_i(x_i)$, $\beta_j\in\Ann_j(y_j)$. Define
$r$ by $r_i=\alpha_i$ and zeros elsewhere; $s$ by $s_j=\beta_j$ and
zeros elsewhere. Both are nonzero and $rs=0$ because $i\neq j$.

\medskip\noindent
\textbf{(3) Cross edges.}

\emph{Edges between $V_k$ ($k\in F$) and $V_0$.} Let $x\in V_k$ and
$y\in V_0$. We first claim that there exists $j\neq k$ with
$y_j\notin\U(R_j)$. Suppose not; then $y_j\in\U(R_j)$ for every
$j\neq k$, so every coordinate of $y$ except possibly the $k$-th is a
unit. Since $y\in V_0\subseteq\Z(R)^{\ast}$, the element $y$ is a
zero-divisor of $R$, hence not a unit; as an element of a product
ring is a unit iff all its coordinates are units, $y_k$ cannot be a
unit either, so $y_k\notin\U(R_k)$. But $R_k$ is a field
(as $k\in F$), and the only non-unit of a field is $0$; therefore
$y_k=0$. Thus $y_k=0$ and $y_i\in\U(R_i)$ for all $i\neq k$, which is
exactly the defining condition for $y\in V_k$. This contradicts
$y\in V_0$, since $V_0$ is disjoint from $V_k$. This proves the claim:
some $j\neq k$ has $y_j\notin\U(R_j)$.

Fix such a $j$. Since $R_j$ is Artin local, Proposition
\ref{prop:zd=nonunit} gives $y_j\in\Z(R_j)$, so $\Ann_j(y_j)\neq 0$;
pick a nonzero $\beta_j\in\Ann_j(y_j)$. Define
$r$ by $r_k=1$, zeros elsewhere (so $r\in\Ann(x)$ since $x_k=0$); define
$s$ by $s_j=\beta_j$, zeros elsewhere. Both are nonzero, and $rs=0$
because $j\neq k$.

\emph{Edges between $V_k$ and $V_\ell$ ($k\neq\ell$, both in $F$).} Let
$x\in V_k$, $y\in V_\ell$. Define $r$ by $r_k=1$, zeros elsewhere, and
$s$ by $s_\ell=1$, zeros elsewhere. Both are nonzero and $rs=0$ because
$k\neq\ell$.

\medskip
Combining (1)--(3), $\WG(R)$ is the complete multipartite graph with
parts $\{V_k\}_{k\in F}$ together with each element of $V_0$ as a
singleton part.
\end{proof}

\begin{remark}\label{rem:sizes}
The part sizes are
\[
|V_k| \;=\; \prod_{\substack{i=1\\ i\neq k}}^{n}|\U(R_i)|, \qquad k\in F,
\quad\text{and}\quad
|V_0| \;=\; |\Z(R)^{\ast}| - \sum_{k\in F}|V_k|.
\]
\end{remark}

\begin{example}\label{ex:Z6}
Let $R=\Z_2\times\Z_3$, so both factors are fields and $F=\{1,2\}$.
Then $\U(\Z_2)=\{1\}$, $\U(\Z_3)=\{1,2\}$, and
\begin{align*}
V_1 &= \{(0,1),(0,2)\}, & |V_1| &= |\U(\Z_3)|=2,\\
V_2 &= \{(1,0)\}, & |V_2| &= |\U(\Z_2)|=1,\\
V_0 &= \varnothing.
\end{align*}
Thus $\WG(\Z_2\times\Z_3) = K(2,1)$, the path on three
vertices. (This agrees with $\WG(\Z_6)$ since $\Z_6\cong\Z_2\times\Z_3$.)
\end{example}

\begin{example}\label{ex:Z12}
Let $R=\Z_{12}\cong\Z_4\times\Z_3$, with $R_1=\Z_4$ and $R_2=\Z_3$.
The factor $R_1=\Z_4$ is local but \emph{not} a field (it has the
zero-divisor $2$), so $1\notin F$; the factor $R_2=\Z_3$ is a field,
so $2\in F$. Hence $F=\{2\}$. We have $\U(\Z_4)=\{1,3\}$,
$\U(\Z_3)=\{1,2\}$, and $|\Z(\Z_{12})^{\ast}|=12-\varphi(12)-1=7$. Then
\[
V_2 = \{(1,0),(3,0)\},\qquad |V_2|=2;\qquad |V_0|=7-2=5.
\]
Hence $\WG(\Z_{12}) = K(2,\underbrace{1,1,1,1,1}_{5})$, a complete
multipartite graph on $7$ vertices with one part of size $2$ and five
singleton parts.
\end{example}

\section{Laplacian spectrum of \texorpdfstring{$\WG(R)$}{WGamma(R)}}\label{sec:laplacian}

The Laplacian characteristic polynomial of a complete multipartite
graph is a known result in spectral graph theory; it follows, for
instance, from the generalized-join Laplacian formula of Cardoso et
al.\ \cite{CDMR} applied to edgeless graphs. We include a short
self-contained derivation by induction from Mohar's join formula
(Theorem \ref{thm:mohar} above; \cite[Corollary 3.7]{Mohar}) because
the explicit polynomial form is needed for the spectral computations
in the rest of this section.

\begin{theorem}\label{thm:multipartite-charpoly}
The Laplacian characteristic polynomial of the complete multipartite
graph $K(p_1,\ldots,p_n)$ on $N=\sum_{i=1}^{n}p_i$ vertices is
\[
\mu_L(x) \;=\; x\,(x-N)^{\,n-1}\prod_{i=1}^{n}\bigl(x-(N-p_i)\bigr)^{\,p_i-1}.
\]
\end{theorem}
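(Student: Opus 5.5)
We argue by induction on the number $n$ of parts, using Mohar's join formula (Theorem \ref{thm:mohar}) together with the observation that $K(p_1,\ldots,p_n)=K(p_1,\ldots,p_{n-1})\vee \overline{K_{p_n}}$, where $\overline{K_{p}}$ denotes the edgeless graph on $p$ vertices. The base case $n=1$ is the edgeless graph $\overline{K_{p_1}}$: its Laplacian is the zero matrix, so $\mu_L(x)=x^{p_1}$. This agrees with the claimed formula, since $N=p_1$ gives $x(x-N)^0\bigl(x-0\bigr)^{p_1-1}=x^{p_1}$.

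For the inductive step, set $G_1=K(p_1,\ldots,p_{n-1})$ with $n_1=N'=N-p_n$ vertices, and $G_2=\overline{K_{p_n}}$ with $n_2=p_n$. By the inductive hypothesis,
\[
\mu_L(G_1,x-p_n)=(x-p_n)(x-p_n-N')^{\,n-2}\prod_{i=1}^{n-1}\bigl(x-p_n-(N'-p_i)\bigr)^{p_i-1}.
\]
Since $p_n+N'=N$, this simplifies to
\[
(x-p_n)(x-N)^{n-2}\prod_{i=1}^{n-1}\bigl(x-(N-p_i)\bigr)^{p_i-1}.
\]
Also $\mu_L(G_2,x-N')=(x-N')^{p_n}$. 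Substituting into Theorem \ref{thm:mohar}, the prefactor is
\[
\frac{x(x-N)}{(x-N')(x-p_n)}.
\]
The factor $(x-p_n)$ cancels, and one factor $(x-N')$ cancels against $(x-N')^{p_n}$, leaving $(x-N')^{p_n-1}=\bigl(x-(N-p_n)\bigr)^{p_n-1}$. This is exactly the missing $i=n$ factor of the product. Meanwhile the powers of $(x-N)$ combine to $(x-N)^{n-1}$, so the product reassembles into $x(x-N)^{n-1}\prod_{i=1}^{n}\bigl(x-(N-p_i)\bigr)^{p_i-1}$.

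The only delicate point is that Mohar's formula is a rational-function identity, and we are dividing by $(x-n_1)(x-n_2)$. These cancellations are legitimate as polynomial identities, because the divisors genuinely appear as factors of the numerator: $(x-p_n)$ comes from the leading factor $x$ of $\mu_L(G_1,\cdot)$ after the shift, and $(x-N')$ is a factor of $(x-N')^{p_n}$ since $p_n\ge 1$. Beyond checking this, the only remaining work is the bookkeeping of exponents of $(x-N)$, which I expect to be routine.
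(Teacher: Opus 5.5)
Your proposal is correct and matches the paper's proof essentially line for line. Both argue by induction on the number of parts, write $K(p_1,\ldots,p_n)=K(p_1,\ldots,p_{n-1})\vee\overline{K}_{p_n}$, and apply Mohar's join formula with $n_1=N-p_n$ and $n_2=p_n$; your added remark that the division cancels genuine polynomial factors (the shifted leading $x$ and one copy of $(x-N')$) makes explicit a step the paper leaves under ``simplifying.''
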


\begin{proof}
Write $K(p_1,\ldots,p_n)=\overline{K}_{p_1}\vee\overline{K}_{p_2}\vee\cdots\vee\overline{K}_{p_n}$,
where $\overline{K}_m$ denotes the edgeless graph on $m$ vertices.
The Laplacian of $\overline{K}_m$ is the zero matrix, so
$\mu_L(\overline{K}_m,x)=x^{m}$. Proceed by induction on $n$.

\emph{Base case $n=1$:} $K(p_1)=\overline{K}_{p_1}$ gives
$\mu_L(x)=x^{p_1}$, matching $x\cdot(x-p_1)^{0}\cdot(x-0)^{p_1-1}=x^{p_1}$.

\emph{Inductive step:} Let
$G_{n-1}=K(p_1,\ldots,p_{n-1})$ on $N_{n-1}=N-p_n$ vertices and assume
\[
\mu_L(G_{n-1},x)=x\,(x-N_{n-1})^{\,n-2}\prod_{j=1}^{n-1}\bigl(x-(N_{n-1}-p_j)\bigr)^{\,p_j-1}.
\]
Apply Theorem \ref{thm:mohar} to $G_n=G_{n-1}\vee\overline{K}_{p_n}$
with $n_1=N_{n-1}$, $n_2=p_n$:
\[
\mu_L(G_n,x)=\frac{x(x-N)}{(x-N_{n-1})(x-p_n)}\,\mu_L(G_{n-1},x-p_n)\,\mu_L(\overline{K}_{p_n},x-N_{n-1}).
\]
Substituting $\mu_L(\overline{K}_{p_n},x-N_{n-1})=(x-N_{n-1})^{p_n}=(x-(N-p_n))^{p_n}$ and
\[
\mu_L(G_{n-1},x-p_n)=(x-p_n)(x-N)^{\,n-2}\prod_{j=1}^{n-1}\bigl(x-(N-p_j)\bigr)^{\,p_j-1},
\]
and simplifying yields
\[
\mu_L(G_n,x)=x\,(x-N)^{\,n-1}\prod_{i=1}^{n}\bigl(x-(N-p_i)\bigr)^{\,p_i-1}. \qedhere
\]
\end{proof}

\begin{corollary}\label{cor:multipartite-spec}
The Laplacian eigenvalues of $K(p_1,\ldots,p_n)$ are:
\begin{enumerate}[label=\textup{(\arabic*)}]
\item $0$ with multiplicity $1$;
\item $N=\sum_{i=1}^{n}p_i$ with multiplicity $n-1$ (so $N$ is an
eigenvalue precisely when $n\geq 2$);
\item for each $i$, the value $N-p_i$ with multiplicity $p_i-1$.
\end{enumerate}
If $p_i=1$ for some $i$, the corresponding factor
$(x-(N-1))^{0}=1$ contributes no eigenvalue.
\end{corollary}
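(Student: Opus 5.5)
The corollary is read off from the factorization in Theorem \ref{thm:multipartite-charpoly}. Since $L(K(p_1,\ldots,p_n))$ is real symmetric, the multiplicity of each Laplacian eigenvalue equals its multiplicity as a root of $\mu_L(x)$. The plan is therefore to read the roots and their exponents from
\[
\mu_L(x)=x\,(x-N)^{\,n-1}\prod_{i=1}^{n}\bigl(x-(N-p_i)\bigr)^{\,p_i-1}.
\]
The main care needed is that several of the listed values may coincide. For example, $N-p_i=N-p_j$ whenever $p_i=p_j$, and then multiplicities add. The statement should be read as a list of eigenvalues with multiplicities, counted as a multiset, consistent with the factorization.

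\textbf{The eigenvalue $0$.} It comes from the explicit factor $x$. I would check that no other factor vanishes at $x=0$.
\begin{itemize}[label=\textup{--}]
\item $N\geq 1$, so $(x-N)$ does not vanish at $0$.
\item A factor $x-(N-p_i)$ with positive exponent requires $p_i\geq 2$. It vanishes at $0$ only if $N=p_i$, i.e.\ $n=1$.
\end{itemize}
In the degenerate case $n=1$, the graph is the edgeless $\overline{K}_{p_1}$, and $0$ has multiplicity $p_1$. This matches reading item (3) as $N-p_1=0$ with multiplicity $p_1-1$, plus item (1). So in general, (1)--(3) together list all roots with total multiplicity $1+(n-1)+\sum_i(p_i-1)=N$, the correct degree. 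This count confirms that nothing is lost.

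\textbf{The eigenvalue $N$.} The factor $(x-N)^{n-1}$ gives $N$ with multiplicity $n-1$. This multiplicity is positive exactly when $n\geq 2$. Moreover, $N-p_i<N$ for every $i$, so no factor from item (3) contributes to it.

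\textbf{The values $N-p_i$.} Each factor $(x-(N-p_i))^{p_i-1}$ contributes $N-p_i$ with multiplicity $p_i-1$. When $p_i=1$ the exponent is zero, the factor equals $1$, and nothing is contributed; this is the final remark of the statement.

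There is no real obstacle here. The only subtle point is interpreting the multiplicities additively when values coincide, which the degree count $N$ justifies.
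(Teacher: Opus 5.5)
Your proposal is correct and follows the same approach as the paper, which gives no separate proof and simply reads the eigenvalues and multiplicities off the factorization in Theorem \ref{thm:multipartite-charpoly}. Your additional care is a refinement the paper leaves implicit: you add multiplicities when values coincide (equal part sizes, and the case $n=1$ where $N-p_1=0$), and you check that the multiplicities total $N$.
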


\subsection*{Main theorem.}

\begin{theorem}\label{thm:main}
Let $R\cong\prod_{i=1}^{n} R_i$ be a finite commutative ring and set
$N=|\Z(R)^{\ast}|$. The Laplacian eigenvalues of $\WG(R)$ are:
\begin{enumerate}[label=\textup{(\arabic*)}]
\item $0$ with multiplicity $1$;
\item $N$ with multiplicity $|F|+|V_0|-1$;
\item for each $k\in F$, the value $N-|V_k|$ with multiplicity $|V_k|-1$.
\end{enumerate}
\end{theorem}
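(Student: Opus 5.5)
The proof combines Theorem \ref{thm:structure} with Corollary \ref{cor:multipartite-spec}. By Theorem \ref{thm:structure}, $\WG(R)$ is the complete multipartite graph $K(p_1,\ldots,p_m)$ whose parts are the sets $V_k$ for $k\in F$, together with $|V_0|$ singleton parts. So the number of parts is $m=|F|+|V_0|$ and the total number of vertices is $\sum_k|V_k|+|V_0|=|\Z(R)^{\ast}|=N$.

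Before applying the corollary, I will check that each $V_k$ with $k\in F$ is nonempty, so that it really is a part. By Remark \ref{rem:sizes}, $|V_k|=\prod_{i\neq k}|\U(R_i)|\geq 1$, since $1\in\U(R_i)$. The element with $k$-th coordinate $0$ and all other coordinates $1$ is a nonzero zero-divisor, provided $n\geq 2$ or $R_k$ is not the whole ring. If $n=1$ and $R$ is a field, then $\Z(R)^{\ast}=\varnothing$ and the statement is vacuous or trivial, as excluded in Theorem \ref{thm:structure}. If $n=1$ and $R$ is not a field, then $F=\varnothing$ and only singleton parts occur. In the degenerate case $n=1$, $k\in F$, the tuple would be $0$; this cannot happen, since then $R$ is a field. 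So every $V_k$ is a genuine nonempty part.

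Now apply Corollary \ref{cor:multipartite-spec} with $m=|F|+|V_0|$ parts and $N$ vertices:
\begin{itemize}
\item $0$ has multiplicity $1$;
\item $N$ has multiplicity $m-1=|F|+|V_0|-1$;
\item each part of size $p$ contributes $N-p$ with multiplicity $p-1$.
\end{itemize}
For the part $V_k$ this gives $N-|V_k|$ with multiplicity $|V_k|-1$. Each singleton part of $V_0$ contributes multiplicity $0$, so it adds no eigenvalues. This is exactly the list in the statement.

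Finally, I will check that the multiplicities total $N$:
\[
1+(|F|+|V_0|-1)+\sum_{k\in F}\bigl(|V_k|-1\bigr)=|V_0|+\sum_{k\in F}|V_k|=N.
\]
Coincidences among the listed values, such as $N-|V_k|=N-|V_\ell|$, only mean that multiplicities add. The statement lists values per $k$, so this causes no conflict.

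The main obstacle is the bookkeeping in the degenerate cases. The first is making sure the count of parts is correct when $F=\varnothing$, where the graph is a clique $K_N$ with eigenvalue $N$ of multiplicity $N-1$, and this matches. The second is the single-part case $m=1$, for example $F=\{k\}$ and $V_0=\varnothing$; then $N$ has multiplicity $0$, which is consistent with the corollary.
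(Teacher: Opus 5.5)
Your proposal is correct and is the same argument as the paper's: Theorem~\ref{thm:structure} identifies $\WG(R)$ as a complete multipartite graph with $|F|+|V_0|$ parts, and Corollary~\ref{cor:multipartite-spec} then gives the spectrum, with the singleton parts contributing nothing. Your extra checks (each $V_k$ is nonempty, and the multiplicities sum to $N$) are sound additions; note, however, that your example single-part case $F=\{k\}$, $V_0=\varnothing$ cannot actually occur. Indeed, any $j\neq k$ has $j\notin F$, and the element with $0$ in coordinate $j$ and $1$ elsewhere then lies in $V_0$. The only genuine single-part case is $F=\varnothing$, $N=1$ (e.g.\ $\Z_4$).
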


\begin{proof}
By Theorem \ref{thm:structure}, $\WG(R)=K(p_1,\ldots,p_M)$ where
$M=|F|+|V_0|$, with $p_k=|V_k|$ for $k\in F$ and $p=1$ for each
singleton part corresponding to a vertex of $V_0$. The total
number of vertices is
$N=\sum_{k\in F}|V_k|+|V_0|=|\Z(R)^{\ast}|$.

Applying Corollary \ref{cor:multipartite-spec}:
$0$ appears with multiplicity $1$; $N$ appears with multiplicity
$M-1=|F|+|V_0|-1$; for each $k\in F$, $N-|V_k|$ appears with
multiplicity $|V_k|-1$; each singleton part contributes multiplicity
$p-1=0$, so no further eigenvalues arise.
\end{proof}

\begin{corollary}\label{cor:integral}
$\WG(R)$ is Laplacian integral for every finite commutative ring $R$.
\end{corollary}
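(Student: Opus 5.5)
The plan is to read integrality directly off Theorem \ref{thm:main}. That theorem lists every Laplacian eigenvalue of $\WG(R)$, with multiplicities, as one of three kinds of value: $0$, $N=|\Z(R)^{\ast}|$, or $N-|V_k|$ for some $k\in F$. Each of these is an integer, since $N$ and $|V_k|$ are cardinalities of finite sets. So every $\mu_i(\WG(R))$ is an integer, which is the definition of Laplacian integrality recalled in Section \ref{sec:prelim}.

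The only point that needs care is that the list in Theorem \ref{thm:main} is \emph{complete}, so that no non-integral eigenvalue is missing. I would settle this by counting multiplicities:
\[
1+\bigl(|F|+|V_0|-1\bigr)+\sum_{k\in F}\bigl(|V_k|-1\bigr)=|V_0|+\sum_{k\in F}|V_k|=N .
\]
The final equality holds because, by construction, the sets $V_k$ ($k\in F$) and $V_0$ partition $\Z(R)^{\ast}$. Hence the listed eigenvalues account for all $N$ eigenvalues of $L(\WG(R))$.

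Equivalently, one can argue from the characteristic polynomial. Theorem \ref{thm:multipartite-charpoly} shows that $\mu_L(\WG(R),x)$ factors completely into linear factors $x-c$ with integer $c$, so every root is an integer.

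The trivial case $\Z(R)^{\ast}=\varnothing$, where $R$ is a field, gives the empty graph with no eigenvalues, and there the statement holds vacuously. I do not expect any genuine obstacle here. The substantive work was already done in Theorem \ref{thm:structure}, which identifies $\WG(R)$ as a complete multipartite graph, and in Theorem \ref{thm:main}.
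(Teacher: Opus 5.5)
Your proposal is correct and follows the paper's proof, which simply observes that every eigenvalue listed in Theorem~\ref{thm:main} ($0$, $N$, and $N-|V_k|$ for $k\in F$) is an integer. Your multiplicity count showing that the list accounts for all $N$ eigenvalues is a harmless extra check. Completeness is already built into Theorem~\ref{thm:main}, since that list is read off the fully factored characteristic polynomial of Theorem~\ref{thm:multipartite-charpoly}.
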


\begin{proof}
All eigenvalues listed in Theorem \ref{thm:main} are integers.
\end{proof}

\begin{corollary}\label{cor:distinct}
The number of distinct Laplacian eigenvalues of $\WG(R)$ is at most
$|F|+2$.
\end{corollary}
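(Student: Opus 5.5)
The bound is a direct count from Theorem \ref{thm:main}. Every Laplacian eigenvalue of $\WG(R)$ lies in the set
\[
S=\{0\}\cup\{N\}\cup\{\,N-|V_k| : k\in F\,\},
\]
where $N=|\Z(R)^{\ast}|$. So I would simply read off this set of candidate values from the three items of Theorem \ref{thm:main} and bound its cardinality: item (1) contributes the single value $0$, item (2) the single value $N$, and item (3) at most one value $N-|V_k|$ for each $k\in F$. This gives $|S|\le 1+1+|F|=|F|+2$.

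Some candidates may have multiplicity zero (for instance $N$ when $|F|+|V_0|=1$, or $N-|V_k|$ when $|V_k|=1$), and some may coincide (for instance $N-|V_k|=N-|V_\ell|$ when $|V_k|=|V_\ell|$). Both effects only reduce the number of distinct eigenvalues actually present. Hence the set of distinct eigenvalues is a subset of $S$, and its cardinality is at most $|F|+2$.

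There is no real obstacle. The one thing to check is that the multiplicities in Theorem \ref{thm:main} account for the whole spectrum, so that no eigenvalue falls outside $S$. They do: they sum to
\[
1+(|F|+|V_0|-1)+\sum_{k\in F}(|V_k|-1)=|V_0|+\sum_{k\in F}|V_k|=N,
\]
which equals the order of the graph by Remark \ref{rem:sizes}. So $S$ contains every eigenvalue, and the bound follows.

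If sharpness is also intended, as the introduction's word ``sharp'' suggests, I would exhibit a ring attaining $|F|+2$. One needs $|F|+|V_0|\ge 2$, all $|V_k|\ge 2$ and pairwise distinct, and none equal to $N$ (automatic since $|V_k|\ge 1$) or to $0$. An example is $R=\Z_3\times\Z_5$: here $|V_1|=4$, $|V_2|=2$, $N=6$, giving eigenvalues $0,6,2,4$, which are $4=|F|+2$ distinct values.
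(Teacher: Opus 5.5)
Your proof is correct and is essentially the paper's own argument: the paper simply notes from Theorem~\ref{thm:main} that the distinct Laplacian eigenvalues lie among $0$, $N$, and $\{N-|V_k| : k\in F\}$, exactly as you do. Your multiplicity-sum check and your sharpness example $\Z_3\times\Z_5$ (spectrum $\{0,2^{(3)},4^{(1)},6^{(1)}\}$, the $n=15$ case of Example~\ref{ex:pq}) are correct extras that go beyond the corollary as stated, and they agree with the paper's remark on when the bound is attained.
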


\begin{proof}
By Theorem \ref{thm:main} the distinct eigenvalues lie among $0$, $N$,
and $\{N-|V_k| : k\in F\}$.
\end{proof}

\begin{remark}
The bound $|F|+2$ is attained whenever the values $|V_k|$ for
$k\in F$ are pairwise distinct, all at least $2$, and
$|F|+|V_0|\geq 2$ (then the eigenvalues $0$, $N$, and the $|F|$
distinct values $N-|V_k|$ are all present and distinct). It can fail
to be attained, for instance, if two distinct $k,\ell\in F$ have
$|V_k|=|V_\ell|$ (the eigenvalues $N-|V_k|$ and $N-|V_\ell|$ then
coincide), or if some $|V_k|=1$ (the value $N-|V_k|=N-1$ then has
multiplicity $|V_k|-1=0$ and does not appear in the spectrum). Note
that $|V_k|\geq 1$ always, since $|V_k|=\prod_{i\neq k}|\U(R_i)|$ and
$1\in\U(R_i)$ for every nontrivial ring $R_i$.
\end{remark}

\begin{remark}\label{rem:all-nonfields}
If every $R_i$ is a non-field (i.e.\ $F=\varnothing$), then
$V_0=\Z(R)^{\ast}$ and Theorem \ref{thm:structure} reduces to
$\WG(R)=K_N$. Theorem \ref{thm:main} then gives Laplacian spectrum
$\{0, N^{(N-1)}\}$, recovering the classical formula for $\mu_L(K_N)$.
\end{remark}

\begin{remark}\label{rem:degenerate}
We adopt the convention that $\WG(R)$ is the graph with vertex set
$\Z(R)^{\ast}$; when $\Z(R)^{\ast}=\varnothing$, the Laplacian
spectrum is the empty multiset. In particular, if $R$ is a field then
$\Z(R)^{\ast}=\varnothing$ and there is nothing to compute. At the
opposite extreme, if $|\Z(R)^{\ast}|=1$ (e.g.\ $R=\Z_4$ or
$R=\F_2[x]/(x^2)$), then $N=1$, $|F|=0$, $|V_0|=1$, and Theorem
\ref{thm:main} gives the spectrum $\{0\}$: the multiplicity of $N=1$
is $|F|+|V_0|-1=0$, and no eigenvalues of the form $N-|V_k|$ arise
($F=\varnothing$).
\end{remark}

\subsection*{Consequences: algebraic connectivity and spanning trees.}

The \emph{algebraic connectivity} $\mu_2(G)$ of $G$ is the second
smallest Laplacian eigenvalue, a much-studied parameter introduced by
Fiedler \cite{Fiedler}. The \emph{number of spanning trees} $\tau(G)$
is computable from the nonzero Laplacian eigenvalues via the
Matrix-Tree Theorem:
\[
\tau(G) \;=\; \frac{1}{N}\prod_{i=2}^{N}\mu_i(G).
\]
Both invariants are immediate from Theorem \ref{thm:main}.

\begin{corollary}\label{cor:alg-conn}
Let $R\cong\prod_{i=1}^{n} R_i$ be a finite commutative ring with
$N=|\Z(R)^{\ast}|\geq 2$, and set
$p_{\max} := \max\bigl(\{|V_k| : k\in F\}\cup\{1\}\bigr)$. Then the
algebraic connectivity of $\WG(R)$ is
\[
\mu_2(\WG(R)) \;=\;
\begin{cases}
N - p_{\max}, & \text{if } p_{\max}\geq 2,\\
N, & \text{if } p_{\max}=1.
\end{cases}
\]
In particular, if $F=\varnothing$ then $\WG(R)=K_N$ and
$\mu_2(\WG(R))=N$.
\end{corollary}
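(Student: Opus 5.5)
The plan is to read off $\mu_2$ directly from the multiset of eigenvalues given by Theorem \ref{thm:main}. Since $N\geq 2$, $\WG(R)$ has at least two vertices. By Theorem \ref{thm:structure} it is a complete multipartite graph, and we first check that it has at least two parts. This holds because $\Z(R)^{\ast}$ cannot consist of a single part $V_k$: $V_k$ is an independent set, so if it were everything then the graph would be edgeless on $N\geq 2$ vertices, i.e.\ disconnected. That would contradict the spanning-subgraph inclusion $\Gamma(R)\subseteq\WG(R)$ together with the connectedness of $\Gamma(R)$. Alternatively, one can argue directly that $F=\{k\}$ with $V_0=\varnothing$ forces $n=2$ and $R_j$ a non-field for $j\neq k$, hence a non-unit nonzero element exists in $V_0$, unless $R=$ field $\times$ field, where $|F|=2$. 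So the multiplicity $|F|+|V_0|-1$ of $N$ is at least $1$, and $0$ is a simple eigenvalue.

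Next, the nonzero eigenvalues are $N$, with positive multiplicity, and the values $N-|V_k|$ for those $k\in F$ with $|V_k|\geq 2$; parts with $|V_k|=1$ contribute nothing. Each such value satisfies $N-|V_k|>0$, since there are at least two parts and therefore $|V_k|<N$. Hence $\mu_2$ is the minimum of $N$ and the numbers $N-|V_k|$ with $|V_k|\geq2$.

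We then split into cases. If $p_{\max}\geq 2$, the minimum is $N-p_{\max}$, which is strictly smaller than $N$ and attained with multiplicity $p_{\max}-1\geq1$. If $p_{\max}=1$, which includes $F=\varnothing$, no values of the form $N-|V_k|$ appear and $\mu_2=N$. The final sentence of the statement follows from Remark \ref{rem:all-nonfields}.

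The only delicate step is the first one: ruling out a single-part graph, so that the multiplicity of $N$ is positive and $N-p_{\max}>0$. I would handle it by the direct ring-theoretic case check sketched above, rather than by citing connectivity of $\Gamma(R)$.
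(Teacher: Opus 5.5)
Your proposal is correct and follows the paper's proof: read off the positive eigenvalues from Theorem~\ref{thm:main} and split on whether $p_{\max}\geq 2$. Your preliminary check that $\WG(R)$ has at least two parts, so that $N-p_{\max}>0$ and $N$ actually occurs, is a point the paper's proof leaves implicit, and your argument via the connectedness of $\Gamma(R)$ settles it. In the direct ring-theoretic version, however, the claim that the situation ``forces $n=2$'' is spurious. What you actually need there is the following: $n=1$ would make $R$ a field; $|F|\geq 2$ already gives two nonempty parts; and for $n\geq 2$ with $F=\{k\}$, the element with $0$ in a non-field coordinate $j\neq k$ and $1$ elsewhere is a nonzero zero-divisor lying in $V_0$.
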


\begin{proof}
By Theorem \ref{thm:main}, the positive Laplacian eigenvalues of
$\WG(R)$ are: the value $N$ (with multiplicity $|F|+|V_0|-1$), and the
values $N-|V_k|$ for $k\in F$ with $|V_k|\geq 2$ (with multiplicity
$|V_k|-1$); values $N-|V_k|$ corresponding to $|V_k|=1$ do not appear,
since their multiplicity is $|V_k|-1=0$.

If some $|V_k|\geq 2$, then $p_{\max}\geq 2$ and the smallest positive
eigenvalue is $\min_{k\in F,\,|V_k|\geq 2}(N-|V_k|)=N-p_{\max}$, which
is at most $N$ (the inequality is strict since $p_{\max}\geq 2$). Hence
$\mu_2(\WG(R))=N-p_{\max}$.

If $|V_k|=1$ for every $k\in F$ (which includes the case $F=\varnothing$),
then $p_{\max}=1$ and no eigenvalue of the form $N-|V_k|$ appears; the
only positive Laplacian eigenvalue is $N$, so $\mu_2(\WG(R))=N$.

The case $F=\varnothing$ gives $V_0=\Z(R)^{\ast}$ and $\WG(R)=K_N$ by
Theorem \ref{thm:structure}; the algebraic connectivity of $K_N$ is
indeed $N$.
\end{proof}

\begin{remark}
The hypothesis $N\geq 2$ in Corollary \ref{cor:alg-conn} is essential:
for $N\leq 1$, the graph has fewer than two vertices and $\mu_2$ is
undefined.
\end{remark}

\begin{corollary}\label{cor:spanning-trees}
Let $R$ be a finite commutative ring with $\WG(R)$ nonempty
\textup{(}equivalently, $\Z(R)^{\ast}\neq\varnothing$, i.e.\ $R$ is not
a field\textup{)}, and write $N=|\Z(R)^{\ast}|$. If $N=1$ then
$\WG(R)$ is a single vertex and $\tau(\WG(R))=1$. If $N\geq 2$ then
\[
\tau(\WG(R)) \;=\; N^{\,|F|+|V_0|-2}\prod_{k\in F}\bigl(N-|V_k|\bigr)^{\,|V_k|-1},
\]
where the empty product over $F=\varnothing$ is taken to be $1$.
\end{corollary}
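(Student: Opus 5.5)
The plan is to apply the Matrix-Tree Theorem, quoted just before Corollary \ref{cor:alg-conn}, to the Laplacian spectrum from Theorem \ref{thm:main}. The case $N=1$ is immediate: the single vertex is its own spanning tree, so $\tau=1$ (and no formula is asserted there). So assume $N\geq 2$. By Theorem \ref{thm:main}, the eigenvalue $0$ has multiplicity exactly $1$. Hence $\WG(R)$ is connected, and the nonzero eigenvalues $\mu_2,\ldots,\mu_N$ are exactly:
\begin{itemize}
\item $N$, with multiplicity $|F|+|V_0|-1$;
\item $N-|V_k|$ for each $k\in F$, with multiplicity $|V_k|-1$.
\end{itemize}
One should check that these listed values are genuinely nonzero. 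The value $N$ is positive. For $N-|V_k|$, note that $N-|V_k|=0$ would mean $\WG(R)$ consists of the single independent part $V_k$, contradicting connectivity when $N\geq 2$. Equivalently, Theorem \ref{thm:structure} forces a second part to exist, since $N>|V_k|$ whenever $|F|+|V_0|\geq 2$.

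This case needs a short argument. If $|F|+|V_0|=1$ and $N\geq 2$, the graph would be a single part. It cannot be $V_0$, which would mean $N=1$. Nor can it be a single $V_k$: the graph would then be edgeless on $N\geq 2$ vertices, so the multiplicity of $0$ would exceed $1$, contradicting Theorem \ref{thm:main} (or Corollary \ref{cor:multipartite-spec}). So the multiplicities count nonzero eigenvalues only. Any factor with $|V_k|=1$ has exponent $0$ and contributes $1$, so it is harmless.

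Then I multiply:
\[
\prod_{i=2}^N\mu_i=N^{|F|+|V_0|-1}\prod_{k\in F}(N-|V_k|)^{|V_k|-1}.
\]
Dividing by $N$ gives the stated formula. As a sanity check, the multiplicities sum to $N-1$:
\[
(|F|+|V_0|-1)+\sum_{k\in F}(|V_k|-1)=|V_0|+\sum_{k\in F}|V_k|-1=N-1.
\]
This matches the $N-1$ factors in the Matrix-Tree product. When $F=\varnothing$, the formula reduces to Cayley's $N^{N-2}$, which confirms consistency with Remark \ref{rem:all-nonfields}.

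The only step needing care is the nondegeneracy point: excluding the situation where the exponent $|F|+|V_0|-2$ is negative, or where a zero factor appears. I expect this to be dispatched by connectivity as argued above, and I regard it as the main, though minor, obstacle.
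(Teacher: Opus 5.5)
Your overall route is the paper's: apply the Matrix-Tree Theorem to the spectrum in Theorem \ref{thm:main}, with the same product and the same multiplicity count. There is, however, a genuine gap. You correctly single out the nondegeneracy step, but the way you dispatch it is circular.

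Item (1) of Theorem \ref{thm:main}, like item (1) of Corollary \ref{cor:multipartite-spec}, is one entry in a multiset listing. It does not assert that the \emph{total} multiplicity of $0$ is $1$. Consider the configuration you want to exclude: $F=\{k\}$, $V_0=\varnothing$, $N=|V_k|\geq 2$. There item (3) would contribute the value $N-|V_k|=0$ a further $|V_k|-1$ times. Indeed, Corollary \ref{cor:multipartite-spec} applied to $K(p_1)$ returns $0$ with total multiplicity $p_1$, which is exactly the spectrum of the edgeless graph, so no contradiction arises there. Reading ``multiplicity exactly $1$'' off Theorem \ref{thm:main} presupposes $N-|V_k|\neq 0$, which is what you are trying to prove. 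Your opening deduction of connectivity has the same flaw, and so does the ``equivalently'' sentence, which assumes $|F|+|V_0|\geq 2$.

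The missing input is ring-theoretic, not spectral. The paper's proof also just asserts ``at least two parts (since $N\geq 2$)'', which rests on the same fact. Here it is. Suppose $k\in F$. Then $R_k$ is a field while $R$ is not, so $n\geq 2$. Let $e_k$ have $1$ in coordinate $k$ and $0$ elsewhere. It is nonzero and annihilated by $e_j$ for any $j\neq k$, so $e_k\in\Z(R)^{\ast}$. Its $k$-th coordinate is nonzero, so $e_k\notin V_k$. Hence $N-|V_k|\geq 1$ for every $k\in F$.

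It follows that for $N\geq 2$ the graph has at least two parts; if $F=\varnothing$, all $N$ parts are singletons. The graph is therefore connected by Theorem \ref{thm:structure}, the exponent $|F|+|V_0|-2$ is nonnegative, and every listed nonzero value is positive. Alternatively, connectivity follows because $\Gamma(R)$ is a connected spanning subgraph of $\WG(R)$ (Anderson--Livingston). With this patch the rest of your argument, including the product and the sanity checks, goes through as written.
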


\begin{proof}
If $N=1$ the graph $\WG(R)$ has one vertex and trivially one spanning
tree. Suppose $N\geq 2$. By Theorem \ref{thm:structure}, $\WG(R)$ is
a complete multipartite graph with at least two parts \textup{(}since
$N\geq 2$\textup{)}, hence has diameter at most $2$ and is connected;
the Matrix-Tree Theorem then gives
\[
\tau(\WG(R)) \;=\; \tfrac{1}{N}\prod_{i=2}^{N}\mu_i(\WG(R)).
\]
Substituting the eigenvalue multiplicities from Theorem
\ref{thm:main},
\[
\tau(\WG(R)) \;=\; \tfrac{1}{N}\cdot N^{|F|+|V_0|-1}\cdot \prod_{k\in F}(N-|V_k|)^{|V_k|-1}
            \;=\; N^{|F|+|V_0|-2}\prod_{k\in F}(N-|V_k|)^{|V_k|-1}. \qedhere
\]
\end{proof}

\section{Specialization to \texorpdfstring{$\Z_n$}{Z\_n}}\label{sec:Zn}

Write $n=p_1^{a_1}\cdots p_m^{a_m}$ with distinct primes $p_1,\ldots,p_m$
and $a_i\geq 1$, so that $\Z_n\cong\prod_{i=1}^{m}\Z_{p_i^{a_i}}$ by the
Chinese Remainder Theorem. Throughout this section we identify the
local-ring decomposition of $\Z_n$ with this product, so that index $k$
corresponds to the prime $p_k$ and the local factor $\Z_{p_k^{a_k}}$.
The factor $\Z_{p_i^{a_i}}$ is a field iff $a_i=1$, so
\[
F = \{\, i : a_i=1\,\},\qquad |\U(\Z_{p_i^{a_i}})| = \varphi(p_i^{a_i}).
\]
By Remark \ref{rem:sizes}, for $k\in F$,
\[
|V_k| \;=\; \prod_{\substack{i=1\\ i\neq k}}^{m}\varphi(p_i^{a_i})
       \;=\; \varphi(n/p_k),
\]
using multiplicativity of $\varphi$ on coprime arguments (here $n/p_k$
is well-defined because $a_k=1$).

We can now read off the Laplacian spectrum of $\WG(\Z_n)$ as an
immediate corollary of the general Theorem \ref{thm:main}, requiring
only the substitution of these values of $|V_k|$ and $N$. Our
derivation differs methodologically from the proof of
\cite[Theorem 4.4]{SMK}: \cite{SMK} works through the divisor-class
partition and the generalized-join Laplacian formula of Cardoso et
al.\ \cite{CDMR}, while in our approach the result follows from the
complete multipartite structure (Theorem \ref{thm:structure}) and the
multipartite Laplacian spectrum (Theorem
\ref{thm:multipartite-charpoly}) in a single step.

\begin{theorem}[cf.\ {\cite[Theorem 4.4]{SMK}}]\label{thm:Zn}
Let $n\geq 4$ be a composite integer \textup{(}so $\Z_n$ is not a
field\textup{)}, with prime factorization
$n=p_1^{a_1}\cdots p_m^{a_m}$, and let $F=\{i:a_i=1\}$. Then the
Laplacian spectrum of $\WG(\Z_n)$ consists of:
\begin{enumerate}[label=\textup{(\arabic*)}]
\item $0$ with multiplicity $1$;
\item $N=n-\varphi(n)-1$ with multiplicity
\[
|F|+|V_0|-1 \;=\; n+|F|-2-\varphi(n)-\sum_{k\in F}\varphi\!\left(\tfrac{n}{p_k}\right);
\]
\item for each $k\in F$, the value $N-\varphi(n/p_k)$ with multiplicity
$\varphi(n/p_k)-1$.
\end{enumerate}
\end{theorem}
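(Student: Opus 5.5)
This is a direct specialization of Theorem \ref{thm:main} to $R=\Z_n$. The plan is to apply that theorem to the Chinese Remainder decomposition $\Z_n\cong\prod_{i=1}^{m}\Z_{p_i^{a_i}}$ and then compute the three quantities it depends on: $N$, $|V_k|$ for $k\in F$, and $|V_0|$. The decomposition is a product of finite local rings, since each $\Z_{p^a}$ has the unique maximal ideal $(p)$. By the uniqueness in Proposition \ref{prop:artinian-decomp} it is the local-ring decomposition, so Theorem \ref{thm:main} applies with $F=\{i: a_i=1\}$, as noted just before the statement. The hypothesis that $n\ge 4$ is composite guarantees $\Z(\Z_n)^{\ast}\neq\varnothing$, so $\WG(\Z_n)$ is nonempty.

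\textbf{Computing $N$.} In a finite ring every non-unit is a zero-divisor: multiplication by a non-unit is not surjective, hence not injective. So $\Z(\Z_n)=\Z_n\setminus\U(\Z_n)$. This set has $n-\varphi(n)$ elements, and removing $0$ gives $N=n-\varphi(n)-1$.

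\textbf{Computing $|V_k|$ and $|V_0|$.} For $k\in F$, Remark \ref{rem:sizes} together with multiplicativity of $\varphi$ gives $|V_k|=\prod_{i\neq k}\varphi(p_i^{a_i})=\varphi(n/p_k)$, as already computed above. Remark \ref{rem:sizes} also gives $|V_0|=N-\sum_{k\in F}\varphi(n/p_k)$.

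\textbf{Assembling the spectrum.} Substituting into Theorem \ref{thm:main}, item (3) becomes the eigenvalue $N-\varphi(n/p_k)$ with multiplicity $\varphi(n/p_k)-1$. For item (2), the multiplicity of $N$ is
\[
|F|+|V_0|-1=|F|+n-\varphi(n)-1-\sum_{k\in F}\varphi(n/p_k)-1=n+|F|-2-\varphi(n)-\sum_{k\in F}\varphi(n/p_k),
\]
which matches the stated formula. Item (1) is inherited unchanged.

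There is no real obstacle here; the only points that need care are already settled. One is identifying $\Z(\Z_n)$ with the non-units in order to count $N$. The other is that the index $k\in F$ really corresponds to $p_k$ with $a_k=1$, which is what makes $n/p_k$ coprime to $p_k$ and lets multiplicativity of $\varphi$ apply.
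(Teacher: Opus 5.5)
Your proposal is correct and follows the paper's own proof essentially step for step. You apply Theorem \ref{thm:main}, count $N=n-\varphi(n)-1$, take $|V_k|=\varphi(n/p_k)$ and $|V_0|=N-\sum_{k\in F}|V_k|$, and substitute; your extra justifications (non-units of a finite ring are zero-divisors, and the CRT factors form the local-ring decomposition) just make explicit points the paper treats as standard.
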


\begin{proof}
Apply Theorem \ref{thm:main}. The total number of zero-divisors of
$\Z_n$ equals $n-\varphi(n)$, so $N=|\Z(\Z_n)^{\ast}|=n-\varphi(n)-1$.
For $k\in F$ we have $|V_k|=\varphi(n/p_k)$ as computed above. Finally,
\[
|V_0|=N-\sum_{k\in F}|V_k|=n-\varphi(n)-1-\sum_{k\in F}\varphi(n/p_k),
\]
and substituting into $|F|+|V_0|-1$ gives the multiplicity of $N$.
\end{proof}

\begin{remark}
Theorem \ref{thm:Zn} is the $\Z_n$-specialization of Theorem
\ref{thm:main} and recovers the result of Shariq, Mathil, and
Kumar \cite[Theorem 4.4]{SMK}. Their proof uses the divisor-based
partition $\Z(\Z_n)^{\ast}=\bigsqcup_{d_i} A_{d_i}$ with
$A_{d_i}=\{x : \gcd(x,n)=d_i\}$ and the generalized-join Laplacian
formula of Cardoso et al.\ \cite{CDMR}; ours obtains the same
spectrum more directly as a corollary of Theorem \ref{thm:structure}
(under the correspondence $A_{p_i}\leftrightarrow V_{k_i}$ for primes
$p_i$ with $a_i=1$, and $\bigsqcup A_{d_j}\leftrightarrow V_0$ for the
remaining divisors).
\end{remark}

\subsection*{Small cases.}

\begin{example}[$n=pq$, distinct primes]\label{ex:pq}
Here $\Z_{pq}\cong\Z_p\times\Z_q$ with $R_1=\Z_p$ and $R_2=\Z_q$, both
fields, so $F=\{1,2\}$. We get
$\varphi(n)=(p-1)(q-1)$, $N=pq-(p-1)(q-1)-1=p+q-2$,
$|V_1|=\varphi(q)=q-1$, $|V_2|=\varphi(p)=p-1$, and
$|V_0|=N-(p-1)-(q-1)=0$. So $\WG(\Z_{pq})=K_{p-1,q-1}$, the complete
bipartite graph. The Laplacian spectrum is
\[
\bigl\{\,0,\; (p+q-2)^{(1)},\; (q-1)^{(p-2)},\; (p-1)^{(q-2)}\,\bigr\}.
\]
For instance, for $n=15$ ($p=3$, $q=5$): $N=6$, $|V_1|=\varphi(5)=4$,
$|V_2|=\varphi(3)=2$, and the Laplacian spectrum is
$\{0,\, 2^{(3)},\, 4^{(1)},\, 6^{(1)}\} = \{0,2,2,2,4,6\}$;
indeed $\WG(\Z_{15})=K_{2,4}$.
\end{example}

\begin{example}[$n=pqr$, distinct primes]\label{ex:pqr}
With $F=\{1,2,3\}$, write $V_p,V_q,V_r$ for the parts indexed by the
primes $p,q,r$ respectively. The part sizes are
$|V_p|=\varphi(qr)=(q-1)(r-1)$,
$|V_q|=\varphi(pr)=(p-1)(r-1)$, $|V_r|=\varphi(pq)=(p-1)(q-1)$, and
\[
N \;=\; pqr-(p-1)(q-1)(r-1)-1 \;=\; pq+pr+qr-p-q-r.
\]
Substituting $u=p-1$, $v=q-1$, $w=r-1$ gives $N=uv+uw+vw+u+v+w$ and
$\sum_{k\in F}|V_k|=uv+uw+vw$, so
\[
|V_0| \;=\; N - \sum_{k\in F}|V_k| \;=\; u+v+w \;=\; p+q+r-3.
\]
The Laplacian spectrum of $\WG(\Z_{pqr})$ is therefore
\[
\bigl\{\,0,\; N^{(p+q+r-1)},\; (N-\varphi(qr))^{(\varphi(qr)-1)},\;
(N-\varphi(pr))^{(\varphi(pr)-1)},\; (N-\varphi(pq))^{(\varphi(pq)-1)}\,\bigr\}
\]
(the multiplicity $p+q+r-1$ of $N$ comes from $|F|+|V_0|-1 = 3+(p+q+r-3)-1$).

\medskip
For the specific case $n=30=2\cdot 3\cdot 5$ we have:
$N=21$, $|V_1|=\varphi(15)=8$, $|V_2|=\varphi(10)=4$,
$|V_3|=\varphi(6)=2$, $|V_0|=p+q+r-3=7$. The Laplacian spectrum is
\[
\bigl\{\,0,\; 21^{(9)},\; 13^{(7)},\; 17^{(3)},\; 19^{(1)}\,\bigr\}.
\]
For $n=105=3\cdot 5\cdot 7$: $N=15+21+35-15=56$, $|V_0|=12$,
$|V_1|=\varphi(35)=24$, $|V_2|=\varphi(21)=12$, $|V_3|=\varphi(15)=8$.
The Laplacian spectrum is
\[
\bigl\{\,0,\; 56^{(14)},\; 32^{(23)},\; 44^{(11)},\; 48^{(7)}\,\bigr\}.
\]
\end{example}

\begin{example}[$n=12=2^{2}\cdot 3$]\label{ex:12}
Here $F=\{2\}$ (only $\Z_3$ is a field). From Example \ref{ex:Z12},
$N=7$, $|V_2|=2$, $|V_0|=5$. The Laplacian spectrum is
\[
\bigl\{\,0,\; 7^{(5)},\; 5^{(1)}\,\bigr\}.
\]
The algebraic connectivity is $N-p_{\max}=7-2=5$, and the number of
spanning trees is
$\tau(\WG(\Z_{12}))=7^{|F|+|V_0|-2}\cdot 5^{|V_2|-1}=7^{1+5-2}\cdot 5^{2-1}=7^{4}\cdot 5 = 12005$.
\end{example}

\subsection*{Tabulated examples.}
\begin{center}
\begin{tabular}{@{}lcccl@{}}
\toprule
$n$ & $N$ & $|F|$ & $|V_0|$ & Laplacian spectrum (multiplicities in superscripts) \\
\midrule
$6$  & $3$ & $2$ & $0$ & $\{0,\,1^{(1)},\,3^{(1)}\}$\\
$10$ & $5$ & $2$ & $0$ & $\{0,\,1^{(3)},\,5^{(1)}\}$\\
$12$ & $7$ & $1$ & $5$ & $\{0,\,5^{(1)},\,7^{(5)}\}$\\
$15$ & $6$ & $2$ & $0$ & $\{0,\,2^{(3)},\,4^{(1)},\,6^{(1)}\}$\\
$30$ & $21$ & $3$ & $7$ & $\{0,\,13^{(7)},\,17^{(3)},\,19^{(1)},\,21^{(9)}\}$\\
\bottomrule
\end{tabular}
\end{center}

\section{Adjacency spectrum}\label{sec:adj}

The complete multipartite structure also determines the adjacency
spectrum of $\WG(R)$. Rather than merely substituting into the
Esser--Harary formula \cite{EH}, we give a decomposition in which each
part of the spectrum is interpreted in terms of the part-size
multiset of $\WG(R)$: the independent sets $V_k$ contribute the
eigenvalue $0$; for each distinct part-size $q$ appearing at $r_q\geq 2$
parts, the eigenvalue $-q$ arises with multiplicity $r_q-1$
\textup{(}including the special case $q=1$, where the size-$1$ parts
form a clique and contribute the eigenvalue $-1$\textup{)}; and the
remaining eigenvalues are governed by a small explicit secular
equation in the distinct part-sizes.

\begin{theorem}[Esser--Harary \cite{EH}]\label{thm:EH}
For the complete multipartite graph $G=K(q_1,\ldots,q_t)$ on
$Q=\sum_{i} q_i$ vertices, the adjacency characteristic polynomial is
\[
\mu_A(G,x) \;=\; x^{\,Q-t}\left[\prod_{i=1}^{t}(x+q_i)
\;-\;\sum_{i=1}^{t}q_i\prod_{\substack{j=1\\ j\neq i}}^{t}(x+q_j)\right].
\]
\end{theorem}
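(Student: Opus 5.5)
The plan is to compute $\det(xI-A)$ for $A=A(K(q_1,\ldots,q_t))$ directly. The key structural fact is that $A=J_Q-\operatorname{diag}(J_{q_1},\ldots,J_{q_t})$, where $J_m$ is the all-ones $m\times m$ matrix. I will split $\mathbb{R}^Q$ into two $A$-invariant pieces. The first is the subspace $W$ of vectors that are constant on each part; it has dimension $t$. The second is its orthogonal complement $W^\perp$, consisting of vectors whose entries sum to zero on each part.

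\textbf{Step 1.} For $v\in W^\perp$, each block $J_{q_i}$ kills the restriction of $v$ to part $i$, and $J_Qv=0$. Hence $Av=0$. So $W^\perp$ contributes the eigenvalue $0$ with multiplicity $Q-t$, which gives the factor $x^{Q-t}$.

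\textbf{Step 2.} On $W$ I use the basis of indicator vectors $\mathbf{1}_{P_i}$ of the parts. Then $A\mathbf{1}_{P_j}=\sum_{i\neq j}q_j\mathbf{1}_{P_i}$. So the matrix of $A|_W$ in this basis is $B=(J_t-I_t)D$ with $D=\operatorname{diag}(q_1,\ldots,q_t)$, and $\mu_A(G,x)=x^{Q-t}\det(xI_t-B)$.

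\textbf{Step 3.} Next I evaluate $\det(xI-B)$. Write $xI-B=(xI+D)-\mathbf{1}\mathbf{q}^{T}$, where $\mathbf{q}=(q_1,\ldots,q_t)^T$. The matrix determinant lemma gives
\[
\det(xI-B)=\det(xI+D)\bigl(1-\mathbf{q}^T(xI+D)^{-1}\mathbf{1}\bigr)=\prod_i(x+q_i)\Bigl(1-\sum_i\frac{q_i}{x+q_i}\Bigr).
\]
Clearing denominators yields exactly $\prod_i(x+q_i)-\sum_i q_i\prod_{j\neq i}(x+q_j)$.

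Both sides are polynomials in $x$. The lemma is valid whenever $x\neq -q_i$ for all $i$, so the identity extends to all $x$ by continuity, or as an identity of polynomials. The only mild obstacle is this invertibility caveat. Alternatively, I can avoid it by expanding $\det\bigl((xI+D)-\mathbf{1}\mathbf{q}^T\bigr)$ multilinearly in the rank-one columns: any two columns of $\mathbf{1}\mathbf{q}^T$ are proportional, so only the terms with at most one such column survive. This gives the formula directly, with no division.
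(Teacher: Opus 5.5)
Your proof is correct and complete. The paper gives no proof of this statement; it only cites Esser--Harary \cite{EH}. So yours is a genuinely different, self-contained route.

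Your splitting $\mathbb{R}^Q=W\oplus W^\perp$ is the device the paper itself uses later, in the proof of Theorem~\ref{thm:adj-WG}:
\begin{itemize}
\item There the zero-sum subspaces $W_k$ give the eigenvalue $0$. The sum $\bigoplus_{k\in F}W_k$ is exactly your $W^\perp$, since singleton parts have trivial zero-sum space.
\item The constant-on-parts vectors $U_j$ giving $-Q_j$ lie inside your $W$.
\end{itemize}

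The difference is what happens on $W$. The paper isolates $0$ and $-Q_j$ by explicit eigenvectors, then falls back on the cited formula to identify the remaining $m$ eigenvalues. Your quotient-matrix computation
\[
\det\bigl(xI_t-(J_t-I_t)D\bigr)=\prod_i(x+q_i)\Bigl(1-\sum_i\frac{q_i}{x+q_i}\Bigr)
\]
produces directly the rational function whose grouping by distinct part sizes is the secular equation \eqref{eq:secular}. Your route therefore makes the adjacency section self-contained: with it, Theorem~\ref{thm:adj-WG} no longer needs the external citation.

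The multilinear column expansion you offer as an alternative is also valid. The columns are $(x+q_j)e_j-q_j\mathbf{1}$. Terms using two or more $\mathbf{1}$-columns vanish, and each term using exactly one contributes $-q_i\prod_{j\ne i}(x+q_j)$. This version avoids the polynomial-identity extension step entirely.
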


\begin{theorem}\label{thm:adj-WG}
Let $R$ be a finite commutative ring with $\WG(R)$ nonempty, and let
$F$, $V_k$ $(k\in F)$, $V_0$ and $N=|\Z(R)^{\ast}|$ be as in Section
\ref{sec:structure}. Let
$\mathcal{P}=\{|V_k| : k\in F\}\cup\{\underbrace{1,\ldots,1}_{|V_0|}\}$
be the multiset of part-sizes of $\WG(R)$ \textup{(}each $|V_k|$ for
$k\in F$ counted once, and $1$ counted with multiplicity $|V_0|$\textup{)}.
Write $Q_1<Q_2<\cdots<Q_m$ for the \emph{distinct} values in
$\mathcal{P}$, and let $r_j$ denote the multiplicity of $Q_j$ in
$\mathcal{P}$. The adjacency spectrum of $\WG(R)$ consists of:
\begin{enumerate}[label=\textup{(\arabic*)}]
\item the eigenvalue $0$ with multiplicity $\sum_{k\in F}(|V_k|-1)$,
contributed by the independent sets $V_k$;
\item for each $j$ with $r_j\geq 2$, the eigenvalue $-Q_j$ with
multiplicity $r_j-1$, contributed by the union of the size-$Q_j$ parts;
\item $m$ further eigenvalues \textup{(}counted with multiplicity\textup{)},
the roots of the secular equation
\begin{equation}\label{eq:secular}
\sum_{j=1}^{m}\frac{r_j\,Q_j}{x+Q_j}\;=\;1.
\end{equation}
\end{enumerate}
In particular, $\WG(R)$ has at most $1+\#\{j:r_j\geq 2\}+m$ distinct
adjacency eigenvalues.
\end{theorem}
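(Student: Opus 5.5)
The plan is to start from Theorem \ref{thm:structure}, which identifies $\WG(R)$ with $K(q_1,\ldots,q_t)$, where $t=|F|+|V_0|$, the multiset $\{q_i\}$ is $\mathcal{P}$, and $Q=N$. Theorem \ref{thm:EH} then gives the adjacency polynomial. Group the factors by distinct part-size: $\prod_{i}(x+q_i)=\prod_{j=1}^m (x+Q_j)^{r_j}$. Factoring this product out of the bracket turns the bracket into
\[
\prod_{j=1}^{m}(x+Q_j)^{r_j}\Bigl(1-\sum_{j=1}^{m}\frac{r_jQ_j}{x+Q_j}\Bigr),
\]
and hence
\[
\mu_A(\WG(R),x)=x^{N-t}\prod_{j=1}^{m}(x+Q_j)^{r_j-1}\cdot S(x),\qquad
S(x):=\prod_{j=1}^{m}(x+Q_j)-\sum_{j=1}^{m}r_jQ_j\prod_{l\neq j}(x+Q_l).
\]
This is a polynomial identity, since both sides are polynomials and agree away from $x=-Q_j$.

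From the factorization, read off the three pieces of the spectrum.

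\begin{enumerate}[label=\textup{(\roman*)}]
\item The exponent of $x$ is $N-t=\sum_{k\in F}|V_k|+|V_0|-|F|-|V_0|=\sum_{k\in F}(|V_k|-1)$. This is item (1).
\item The factor $(x+Q_j)^{r_j-1}$ is nontrivial exactly when $r_j\ge2$, which gives item (2).
\item $S$ is monic of degree $m$, so it has $m$ roots counted with multiplicity. This accounts for the $m$ further eigenvalues.
\end{enumerate}

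The step needing care is identifying the roots of $S$ with the solutions of \eqref{eq:secular}. For $x\neq -Q_j$, $S(x)=0$ is equivalent to \eqref{eq:secular} after dividing by $\prod_j(x+Q_j)$. It remains to check that no $-Q_j$ is a root of $S$. Evaluating gives
\[
S(-Q_j)=-r_jQ_j\prod_{l\neq j}(Q_l-Q_j).
\]
This is nonzero because the $Q_l$ are distinct and $r_jQ_j\ge1$. So the roots of $S$ are exactly the solutions of the secular equation, with multiplicity.

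Two short remarks finish the argument. First, the rational function $\sum_j r_jQ_j/(x+Q_j)$ is strictly decreasing on each interval between consecutive poles, which shows the $m$ roots are real and simple. One lies in each interval $(-Q_{j+1},-Q_j)$ and one lies in $(-Q_1,\infty)$. This is optional but clarifies "counted with multiplicity." Second, the degree check $(N-t)+\sum_j(r_j-1)+m=N-t+t=N$ confirms that the list is complete.

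For the final count of distinct eigenvalues, the eigenvalues lie among $0$, the values $-Q_j$ with $r_j\ge2$, and the $m$ secular roots. That is at most $1+\#\{j:r_j\ge2\}+m$.

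The main obstacle is the bookkeeping at the poles $x=-Q_j$, i.e.\ making sure the roots of $S$ are disjoint from the factors $(x+Q_j)^{r_j-1}$. The computation of $S(-Q_j)$ above settles it. I also need to be careful that the $0$-multiplicity comes only from $x^{N-t}$: $S(0)=\prod_jQ_j\bigl(1-\sum_j r_j\bigr)=\prod_jQ_j(1-t)$, which vanishes only when $t=1$. That case means $N=1$ or a single independent part, and it is harmless for an upper bound on distinct eigenvalues; I would note it as a remark rather than a case split.
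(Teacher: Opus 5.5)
Your argument is correct, but it takes a different route from the paper's.

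The paper first builds explicit eigenspaces. For the eigenvalue $0$ it uses zero-sum vectors $W_k$ supported on each $V_k$; for $-Q_j$ it uses constant-on-parts, zero-sum vectors $U_j$ on the union of the size-$Q_j$ parts. It checks that these are mutually orthogonal with total dimension $N-m$, and only then applies Esser--Harary. There it uses the eigenvector lower bound from (2) to justify that $\prod_j(x+Q_j)^{r_j-1}$ divides the bracket, before identifying the cofactor with the secular polynomial and checking that it does not vanish at any $-Q_j$.

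You notice that this divisibility, and the exact power of $x$, are already visible once the $q_i$ are grouped by value. So the single identity $\mu_A=x^{N-t}\prod_j(x+Q_j)^{r_j-1}S(x)$, together with $S(-Q_j)\neq 0$, gives the whole multiset, and the eigenvector construction becomes unnecessary.

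Your route is shorter and gives a little more. The monotonicity argument shows the $m$ secular roots are real and simple, with one in each interval $(-Q_{j+1},-Q_j)$ and one in $(-Q_1,\infty)$. Your computation $S(0)=(1-t)\prod_jQ_j$ also catches a coincidence the paper's closing sentence (``the dimension count forces the lower bounds to be equalities'') passes over: a secular root equal to $0$ when $t=1$. In that case item (1) must be read as a contribution to the multiset, not as the total multiplicity of $0$. In fact $t=1$ forces $N=1$, e.g.\ $R=\Z_4$. If $F=\{k\}$, nonemptiness of $\WG(R)$ forces another factor that is a non-field, and then $V_0\neq\varnothing$. So your alternative of ``a single independent part'' only ever means a single vertex.

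What the paper's approach buys in return is explicit eigenvectors. These are what substantiate the phrases ``contributed by the independent sets $V_k$'' and ``contributed by the union of the size-$Q_j$ parts'' in the statement. Your proof determines the spectrum as a multiset but says nothing about the eigenspaces.
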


\begin{proof}
By Theorem \ref{thm:structure}, $\WG(R)$ is a complete multipartite
graph with $t=|F|+|V_0|$ parts: each part $V_k$ ($k\in F$) of size
$|V_k|$, and $|V_0|$ singleton parts. The total number of parts is
$\sum_{j=1}^m r_j=t$.

\textbf{Multiplicity of $0$.} For each $k\in F$, let $W_k$ be the
subspace of $\mathbb{R}^{\Z(R)^{\ast}}$ consisting of vectors supported in
$V_k$ with $\sum_{v\in V_k}\phi(v)=0$. For $\phi\in W_k$ and any
vertex $w$: if $w\in V_k$ then $\phi$ is supported in $V_k$ but $w$
is not adjacent to any other vertex of $V_k$, so $(A\phi)_w=0$; if
$w\notin V_k$ then $w$ is adjacent to every vertex of $V_k$, so
$(A\phi)_w=\sum_{u\in V_k}\phi(u)=0$. Thus $A\phi=0$ and
$W_k\subseteq\ker A$. The subspaces $W_k$ are pairwise orthogonal
(disjoint supports), so $\bigoplus_{k\in F}W_k\subseteq\ker A$ has
dimension $\sum_{k\in F}(|V_k|-1)$, showing that the eigenvalue $0$
has multiplicity at least $\sum_{k\in F}(|V_k|-1)$.

\textbf{Multiplicity of $-Q_j$ for $r_j\geq 2$.} Fix $j$ with
$r_j\geq 2$, and let $S_j$ denote the union of the $r_j$ parts of
$\WG(R)$ of size $Q_j$. The induced subgraph on $S_j$ is the complete
$r_j$-partite graph $K(Q_j,\ldots,Q_j)$ with $r_j$ equal parts. Let
$U_j$ be the subspace of vectors $\phi$ supported in $S_j$ that are
constant on each size-$Q_j$ part with the constants summing to zero:
$\phi|_P\equiv c_P$ for each size-$Q_j$ part $P\subseteq S_j$, and
$\sum_P c_P=0$. Then $\dim U_j=r_j-1$ (the space of constant-on-parts
vectors on $S_j$ is $r_j$-dimensional, cut to $r_j-1$ by the linear
constraint $\sum_P c_P=0$).

For $\phi\in U_j$ and any vertex $w$: if $w\in S_j$, say $w$ lies in
the size-$Q_j$ part $P$ with $\phi|_P\equiv c_P$, then $w$ is adjacent
to every vertex of $S_j\setminus P$ (and to no other vertex of $S_j$).
Hence
\[
(A\phi)_w \;=\; \sum_{P'\neq P}Q_j\,c_{P'}
            \;=\; Q_j\bigl(\textstyle\sum_{P'}c_{P'}-c_P\bigr)
            \;=\; -Q_j\,c_P \;=\; -Q_j\,\phi(w).
\]
If $w\notin S_j$ (so $w$ lies in a part of size $\neq Q_j$), then
$w$ is adjacent to every vertex of $S_j$, and
\[
(A\phi)_w \;=\; \sum_{P\subseteq S_j}Q_j\,c_P
            \;=\; Q_j\cdot 0 \;=\; 0 \;=\; -Q_j\,\phi(w),
\]
since $\phi(w)=0$. Thus $A\phi=-Q_j\,\phi$ and $U_j$ is contained in
the $(-Q_j)$-eigenspace, showing that $-Q_j$ has multiplicity at
least $r_j-1$.

\textbf{The remaining eigenvalues.} We verify orthogonality of the
subspaces $\{W_k : k\in F\}$ and $\{U_j : r_j\geq 2\}$ in three cases.
First, for $k\neq k'$ in $F$, the subspaces $W_k$ and $W_{k'}$ have
disjoint supports $V_k$ and $V_{k'}$, so $W_k\perp W_{k'}$. Second,
for $j\neq j'$ with $r_j,r_{j'}\geq 2$, the supports $S_j$ and
$S_{j'}$ are disjoint \textup{(}since $Q_j\neq Q_{j'}$ and no part of
$\WG(R)$ has two different sizes\textup{)}, so $U_j\perp U_{j'}$.
Third, for $k\in F$ and $j$ with $r_j\geq 2$, there are two sub-cases.
If $|V_k|\neq Q_j$, then $V_k\cap S_j=\varnothing$ and the supports
are disjoint. If $|V_k|=Q_j$, then $V_k\subseteq S_j$, and for
$\phi\in W_k$ and $\psi\in U_j$, $\psi$ is constant on $V_k$ \textup{(}say
$\psi\equiv c$ on $V_k$\textup{)}, giving
$\langle\phi,\psi\rangle=\sum_{v\in V_k}\phi(v)\psi(v)=
c\sum_{v\in V_k}\phi(v)=0$ by the zero-sum condition on $\phi$.

The direct sum $\bigoplus_{k\in F}W_k\oplus\bigoplus_{j:r_j\geq 2}U_j$
therefore has dimension
\[
D \;=\; \sum_{k\in F}(|V_k|-1)+\sum_{j:\,r_j\geq 2}(r_j-1)
    \;=\; (N-t)+(t-m) \;=\; N-m,
\]
using $\sum_{k\in F}(|V_k|-1)=N-t$ and
$\sum_{j:r_j\geq 2}(r_j-1)=\sum_{j=1}^m(r_j-1)=t-m$ \textup{(}the terms
with $r_j=1$ contribute zero\textup{)}. Hence the orthogonal complement
has dimension $N-D=m$, and the remaining $m$ eigenvalues are those of
the restriction of $A$ to this complement.

To identify them, apply the Esser--Harary formula
(Theorem \ref{thm:EH}): $\mu_A(\WG(R),x)=x^{N-t}B(x)$ where $B(x)$
has degree $t$. From (2), $\prod_{j=1}^{m}(x+Q_j)^{r_j-1}$ divides
$B(x)$ \textup{(}each $-Q_j$ is a root of $B(x)$ of multiplicity at
least $r_j-1$, with the convention that $r_j=1$ contributes a factor
of $1$\textup{)}, so we may write
\[
B(x) \;=\; \prod_{j=1}^{m}(x+Q_j)^{r_j-1}\cdot P(x)
\]
for some polynomial $P(x)$. Comparing degrees gives
$\deg P(x)=t-\sum_{j=1}^{m}(r_j-1)=t-(t-m)=m$. The remaining
adjacency eigenvalues are the roots of $P(x)$. To find them, group
the $q_i$ in Theorem \ref{thm:EH} by their distinct values:
$\prod_{i=1}^{t}(x+q_i)=\prod_{j=1}^{m}(x+Q_j)^{r_j}$ and
$\sum_{i=1}^{t}q_i/(x+q_i)=\sum_{j=1}^{m}r_jQ_j/(x+Q_j)$. The
Esser--Harary identity then reads
$B(x)/\prod_{j=1}^{m}(x+Q_j)^{r_j}=1-\sum_{j=1}^{m}r_jQ_j/(x+Q_j)$,
and substituting $B(x)=\prod_{j}(x+Q_j)^{r_j-1}\cdot P(x)$ yields
\[
\frac{P(x)}{\prod_{j=1}^{m}(x+Q_j)} \;=\; 1-\sum_{j=1}^{m}\frac{r_j\,Q_j}{x+Q_j}.
\]
Evaluating at $x=-Q_{j_0}$ shows $P(-Q_{j_0})=-r_{j_0}Q_{j_0}
\prod_{\ell\neq j_0}(Q_\ell-Q_{j_0})\neq 0$ \textup{(}since the $Q_\ell$
are distinct\textup{)}, so $P(x)$ has no root at any $-Q_j$.
Consequently the multiplicity of $-Q_j$ as a root of $B(x)$ is
exactly $r_j-1$, and the $m$ roots of $P(x)$ are precisely the
solutions of \eqref{eq:secular}, proving (3). The dimension count
$D+m=N$ then forces the lower bounds in (1) and (2) to be equalities.

The distinct-eigenvalue bound follows: (1) contributes the single
value $0$, (2) contributes at most $\#\{j:r_j\geq 2\}$ values, and
(3) contributes at most $m$ values.
\end{proof}

\begin{remark}
The decomposition in Theorem \ref{thm:adj-WG} unifies the
within-part and cross-part contributions. The eigenvalue $0$ comes
from vectors supported in a single part with zero sum
\textup{(}reflecting that each part is an independent set\textup{)},
while the eigenvalue $-Q_j$ for $r_j\geq 2$ comes from
constant-on-part vectors across the $r_j$ parts of size $Q_j$ with
constants summing to zero \textup{(}reflecting that the union of
size-$Q_j$ parts induces the complete $r_j$-partite graph
$K(Q_j,\ldots,Q_j)$\textup{)}. The case $Q_j=1$ recovers the
contribution of singleton parts: their union induces the clique
$K_{r_1}$, contributing $-1$ with multiplicity $r_1-1$.
\end{remark}

\begin{remark}
Theorem \ref{thm:adj-WG} parallels the Laplacian picture of Theorem
\ref{thm:main} but is genuinely different in character: the Laplacian
spectrum is integral (Corollary \ref{cor:integral}), whereas the roots
of \eqref{eq:secular} are typically irrational, so $\WG(R)$ is in
general \emph{not} adjacency integral. The integer parts of the
adjacency spectrum are $0$ and the eigenvalues $-Q_j$ from repeated
part-sizes; the secular roots \eqref{eq:secular} contribute the
non-integer eigenvalues in general.
\end{remark}

\begin{example}\label{ex:adj-Z6}
For $R=\Z_6\cong\Z_2\times\Z_3$ we have $\WG(R)=K(2,1)$, so $F=\{1,2\}$
with $|V_1|=2$, $|V_2|=1$, and $V_0=\varnothing$. The part-size
multiset is $\mathcal{P}=\{1,2\}$, so $m=2$ with $Q_1=1$, $Q_2=2$,
$r_1=r_2=1$ \textup{(}no repeated sizes\textup{)}. By Theorem
\ref{thm:adj-WG}: the eigenvalue $0$ has multiplicity
$(2-1)+(1-1)=1$; no eigenvalues from (2) since both $r_j=1$; and the
$m=2$ remaining eigenvalues solve
$\tfrac{1}{x+1}+\tfrac{2}{x+2}=1$, i.e.\ $x^{2}=2$. Hence the
adjacency spectrum is $\{\,-\sqrt2,\,0,\,\sqrt2\,\}$, confirming
non-integrality.
\end{example}

\begin{example}\label{ex:adj-Z12}
For $R=\Z_{12}\cong\Z_4\times\Z_3$ we have $F=\{2\}$, $|V_2|=2$,
$|V_0|=5$, $N=7$. The part-size multiset is
$\mathcal{P}=\{1,1,1,1,1,2\}$, so $m=2$ with $Q_1=1$, $r_1=5$, and
$Q_2=2$, $r_2=1$. By Theorem \ref{thm:adj-WG}: eigenvalue $0$ with
multiplicity $|V_2|-1=1$; from (2) the eigenvalue $-1$ with
multiplicity $r_1-1=4$; and the $m=2$ remaining eigenvalues solve
$\tfrac{5\cdot 1}{x+1}+\tfrac{1\cdot 2}{x+2}=1$, that is
$x^{2}-4x-10=0$, giving $x=2\pm\sqrt{14}$. The adjacency spectrum is
$\{\,2-\sqrt{14},\,(-1)^{(4)},\,0,\,2+\sqrt{14}\,\}$.
\end{example}

\begin{example}[A repeated non-singleton size]\label{ex:adj-Z3Z3}
For $R=\Z_3\times\Z_3$, both factors are fields and $F=\{1,2\}$ with
$|V_1|=|V_2|=|\U(\Z_3)|=2$, and $V_0=\varnothing$. Thus
$\WG(R)=K(2,2)=K_{2,2}$ on $N=4$ vertices. The part-size multiset is
$\mathcal{P}=\{2,2\}$, so $m=1$ with $Q_1=2$ and $r_1=2$. By Theorem
\ref{thm:adj-WG}: the eigenvalue $0$ has multiplicity $(2-1)+(2-1)=2$;
from (2), the eigenvalue $-Q_1=-2$ appears with multiplicity
$r_1-1=1$; and the $m=1$ remaining eigenvalue solves
$\tfrac{2\cdot 2}{x+2}=1$, giving $x=2$. The adjacency spectrum is
$\{\,-2,\,0,\,0,\,2\,\}$, matching the known spectrum of $K_{2,2}$.
\end{example}

\begin{remark}\label{rem:adj-nonintegral}
A complete characterization of the finite commutative rings $R$ for
which $\WG(R)$ is adjacency integral remains open. By Theorem
\ref{thm:adj-WG} this depends only on the part-size multiset
$\mathcal{P}$, and reduces to the question of when all roots of the
rational equation \eqref{eq:secular} are integers, an elementary
problem in the distinct values $Q_1<\cdots<Q_m$ and their
multiplicities $r_1,\ldots,r_m$.
\end{remark}

\section{Conclusion}

The complete multipartite structure (Theorem \ref{thm:structure})
underlies all of the spectral results in this paper. It also opens
several further directions.

\emph{Other spectra.} The signless Laplacian, normalized Laplacian,
and distance-Laplacian spectra of complete multipartite graphs are
known, and can be specialized to $\WG(R)$ along the same lines. For
$R=\Z_n$ the normalized Laplacian has been treated in \cite{NRA};
extending other spectral notions to arbitrary finite commutative
rings via Theorem \ref{thm:structure} is a natural next step.

\emph{Adjacency integrality.} As noted in Remark
\ref{rem:adj-nonintegral}, a complete characterization of finite
commutative rings $R$ with $\WG(R)$ adjacency integral is open.

\emph{Energy and other invariants.} The Laplacian energy, spectral
radius, and various entropy-like invariants of $\WG(R)$ can be read
off from Theorem \ref{thm:main}.

\emph{Beyond finite rings.} The structure theorem (Theorem
\ref{thm:structure}) uses only that $R$ is Artin, so it extends
verbatim to any Artin commutative ring: $R\cong\prod_{i=1}^n R_i$
with each $R_i$ an Artin local ring \textup{(}possibly
infinite\textup{)}, and $\WG(R)$ is the complete multipartite graph
with parts $\{V_k\}_{k\in F}$ together with the singleton parts from
$V_0$. For instance, for $R=\mathbb{C}\times\mathbb{C}$ \textup{(}or
$\mathbb{R}\times\mathbb{R}$\textup{)}, each factor is an Artin local
ring \textup{(}a field, in fact\textup{)}, so $F=\{1,2\}$,
$V_0=\varnothing$, and $\WG(R)$ is the complete bipartite graph
$K(|V_1|,|V_2|)$ with both parts infinite. The spectral results of
Sections \ref{sec:laplacian} and \ref{sec:adj}, however, are
statements about finite-dimensional matrices on finite graphs and do
not extend to such infinite cases without substantial reformulation;
spectral theory of the resulting infinite graphs is a separate
problem. For non-Artin commutative rings the local-product
decomposition need not exist and the partition
$\{V_k\}_{k\in F}\cup\{V_0\}$ is not available; identifying the
right analogue there is an open problem.

\end{document}